\theoremstyle{plain}
\newtheorem{theorem}{Theorem}
\newtheorem{corollary}{Corollary}
\newtheorem{definition}{Definition}
\newtheorem{proposition}{Proposition}
\newtheorem{example}{Example}
\newcommand{\R}{\mathbb{R}}
\newcommand{\IR}{\mathbb{IR}} 
\newcommand\ol{\overline}    
\newcommand\ul{\underline}
\DeclareMathOperator{\df}{:=}
\newcommand{\NP}{{\ensuremath{\mathsf{NP}}}\xspace}
\newcommand{\pP}{{\ensuremath{\mathsf{P}}}\xspace}
\newcommand{\coNP}{{$\mathsf{co\text{-}NP}$}\xspace}
\newcommand{\ZDG}{{zero DG}\xspace}
\newcommand{\DG}{{DG}\xspace}
\newcommand{\ILP}{{ILP}\xspace}
\newcommand{\typeA}{{type (A)}\xspace}
\newcommand{\typeB}{{type (B)}\xspace}
\newcommand{\typeC}{{type (C)}\xspace}
\newcommand{\TypeA}{{Type (A)}\xspace}
\newcommand{\TypeB}{{Type (B)}\xspace}
\newcommand{\TypeC}{{Type (C)}\xspace}
\newcommand{\polynomial}{
 polynomial}
\newcommand{\coh}{
 \coNP-hard}
\newcommand{\npc}{
 \NP-complete}
\newcommand{\Eng}[1]{}
\newcommand{\T}[1]{#1{}^\top\!} 
\newcommand{\Int}[1]{\protect{\textrm{\boldmath $#1$}}} 
\renewcommand\mid{ \text{\ensuremath{\mathsf{\ subj.\ to\ }}} }
\renewcommand\min{ \text{\ensuremath{\mathsf{\ min\,}} }}
\renewcommand\max{ \text{\ensuremath{\mathsf{\ max\,}} }}
\newcommand\maxx{ \text{\ensuremath{\mathsf{\ max\,}}}}
\newcommand\minn{ \text{\ensuremath{\mathsf{\ min\,}}}}
\newcommand{\Intt}[1]{\T{\Int #1}} 
\newcommand{\D}[1]{#1_\Delta} 
\newcommand{\diag}[1]{\mathop\mathrm{diag}(#1)} 
\newcommand{\Dm}[1]{\underline #1} 
\newcommand{\Hm}[1]{\overline #1} 
\newcommand{\Hmt}[1]{\T{\Hm #1}} 
\newcommand{\Dmt}[1]{\T{\Dm #1}} 
\renewcommand\ol{\overline}              
\renewcommand\ul{\underline}             
\newcommand{\olt}[1]{\T{\ol #1}} 
\newcommand{\ult}[1]{\T{\ul #1}} 
\newcommand\bfm[1]{\protect{\textrm{\boldmath $#1$}}}
\renewcommand\b {\bfm{b}}
\newcommand\cc {\bfm{c}}
\newcommand\A {\bfm{A}}
\newcommand\At {\T{\bfm{A}}}
\newcommand\bt {\T{\bfm{b}}}
\newcommand\ct {\T{\bfm{c}}}
\newcommand{\IFunc}[1]{#1(\A,\,\b,\,\cc)} 
\newcommand{\IF}{\IFunc{f}}
\newcommand{\IG}{\IFunc{g}}
\newcommand{\IDmF}{\IFunc{\Dm f}}
\newcommand{\IHmF}{\IFunc{\Hm f}}
\newcommand{\IDmG}{\IFunc{\Dm g}}
\newcommand{\IHmG}{\IFunc{\Hm g}}
\newcommand{\brule}{\rule[-1.5ex]{0pt}{0pt}}
\title{Duality Gap in Interval Linear Programming\thanks{
Jana Novotn\'a and Milan Hlad\'ik were supported by the Czech Science Foundation Grant P403-18-04735S.
The student work was supported by the grant SVV–2017–260452.}~\thanks{
Short version was presented at conference SOR'17; see~\cite{sor}.
}
}
\author[1]{Jana Novotná}
\author[1]{Milan Hladík}
\author[1]{Tomáš Masařík}
\affil[1]{Department of Applied Mathematics, Faculty of Mathematics and Physics, Charles University, Prague, 
Czech Republic}
\affil[ ]{\texttt{\{janca,hladik,masarik\}@kam.mff.cuni.cz}}
\date{}
\providecommand{\keywords}[1]{\textbf{Keywords: } #1}
\begin{document}

\maketitle

\begin{abstract}

This paper deals with the problem of linear programming with inexact data represented by real closed intervals.
Optimization problems with interval data arise in practical computations and they are of theoretical interest for more than forty years.
We extend the concept of duality gap (DG), the~difference between the primal and its dual optimal value, into interval linear programming.
We consider two situations: First, DG is zero for every realization of interval parameters (the so called strongly zero DG) and, second, DG is zero for at least one realization of interval parameters (the so called weakly zero DG).

 We characterize strongly and weakly zero DG and its special case where the matrix of coefficients is real. 
We discuss computational complexity of testing weakly and strongly zero DG for commonly used types of interval linear programs and their variants with the real matrix of coefficients. We distinguish the NP-hard cases and the cases that are efficiently decidable.
Based on DG conditions, we extend previous results about the bounds of the optimal value set given by Rohn.
We provide equivalent statements for the bounds.

\end{abstract}

\keywords{Interval Analysis, Linear Programming, Interval Linear Programming,\\\hangindent=2.72cm Duality~Gap, Computational Complexity.}

\section{{Introduction}}
The area of interval linear programming connects linear programming with interval analysis, which is one of approaches to deal with inexact data. Optimization problems with interval data are intensively studied for last forty years, see the survey by Hladík \cite{Hla2012a} or the chapters by Rohn \cite{Roh2006:3,Roh2006:2} for more details.
 
We extend the concept of duality gap (i.e., the difference between the~primal and the dual optimal value) from linear programming into the setting of interval linear programming. To our best knowledge, we are the first who study thoroughly this topic in the context of interval linear programming despite the fact that duality gap is one of the basic concepts of linear programming and a~very useful property to describe behavior of programs with respect to their duals. We investigate two situations. At least one particular realization of~interval values has zero duality gap (weakly zero duality gap), or all realizations have zero duality gap (strongly zero duality gap).
 
In particular the second property, strongly zero duality gap, is very useful since it allows us to move from a primal to its dual program,  e.g.\ it is a sufficient assumption to have an equality between optimal value sets of a primal and its dual interval linear program. Our main interest is in characterizations of weakly and strongly zero duality gap and testing their computational complexity.

We focus on interval linear programs and also on their special case --- interval linear programs with the real constraint matrix (degenerated matrix). Even programs with degenerated matrix are studied \cite{GabMur2010} in recent years and often used in practical computation (network flows, transportation problems \cite{CerAmb2017}, etc.).

In the setting of strongly zero duality gap we obtained an equivalent characterization for the lower and the upper bound on the optimal value set. This extends previous results given by Rohn~\cite{Roh2006:3}.

\subsection{Problem formulation}    
    Given two matrices $\Dm A,\; \Hm A \in \R^{m \times n}$ such that $\Dm A \le \Hm A$, we define \emph{interval matrix} $\A$ as the set
 $$\Int A = [\Dm A, \Hm A] = \{A \in \R ^{m\times n} : \Dm A \le A \le \Hm A\},$$
 where $\le$ means element-by-element comparison of the corresponding matrices.
 The set of all interval $m \times n$ matrices is denoted by $\IR^{m\times n}.$

Let $\A\in\IR^{m\times n}$, $\b \in\IR^{m}$ and $\cc\in\IR^{n}$ be given. We define an \emph{interval linear program} (ILP\footnote{Do not confuse it with the same abbreviation ILP for Integer Linear Programming.}) as a family of linear programs 
\begin{align*}
\min { \T cx\mid x \in \mathcal M(A, b)},
\end{align*} 
where $A \in \Int A,$ $b \in \Int b,\; c \in \Int c$, and $\mathcal M(\A, \b)$ is the feasible set described by linear equations and inequalities. We write it in short as 
 \begin{align}
 \min{\T{\Int c}x\mid x \in \mathcal M(\Int A, \Int b)}. \label{generalILP}
\end{align}
For interval arithmetic and an introduction to interval analysis, see, e.g., books \cite{AleHer1983,MooKea2009}.

\paragraph{Duality gap.} 
Duality gap is the difference of optimal values of an primal and its dual program in mathematical programming. In linear programming, duality gap is zero if and only if at least one of the primal or its dual program is feasible; otherwise it is $\infty$.

\begin{definition}
\emph{Duality gap} of $(\ref{generalILP})$ is said to be \emph{weakly zero} if at least one scenario has zero duality gap, and \emph{strongly zero} if all scenarios have zero duality gap.

\end{definition}

\subsection{Notations, preliminaries and state-of-the-art}
We denote the center matrix of $\A$ by $A_c\df\frac{1}{2}(\ul A+\ol A)$ and the radius matrix of~$\A$ by $ \D A\df \frac{1}{2}(\ol A-\ul A)$. We denote by $\diag v$ the diagonal matrix with entries $v_1,\dotsc,v_n $.

A particular realization of interval values, i.e.\ a linear program with real values, is called \emph{a~scenario}. We denote it by $(A,b,c)$, where $A \in \A$, $b\in\b$ and $c\in\cc$. 
We say that ILP is \emph{weakly}, resp. \emph{strongly}, \emph{feasible} if it is feasible for at least one scenario, resp. all scenarios. A solution which satisfies all scenarios, resp. at least one scenario, is called \emph{strong solution}, resp. \emph{weak solution}.
An~interval matrix is said to be \emph{degenerated} if 
$ \Dm A = \Hm A$. 
 We are using the term \emph{ILP with degenerated matrix} which means an ILP with a real constraint matrix and an interval right hand side and an  interval optimization function.

Since different forms of ILP do not have to be equivalent (unlike in linear programming) we have to distinguish between them; see \cite{GarHla2017a,Hla2017a} for detailed discussion. Dual problems in ILP straightforwardly extend dual problems in linear programming, for some results see \cite{Roh1980,Roh2006:3,GabMur2010b,Serafini05}.
Basic types of (\ref{generalILP}) and their dual programs are shown in Table~\ref{tab:dual}. 

We denote the \emph{optimal value set} of (\ref{generalILP}) as the set of optimal values of all scenarios.
Formally, let
\begin{align*}
f(A,b,c) := \min { \T cx\mid x \in \mathcal M(A, b)}
\end{align*} 
be the optimal solution of a scenario with $A \in \Int A,$ $b \in \Int b$ and $c \in \Int c$. Then the optimal value set is
\begin{align*}
f(\A,\b,\cc) := \{f(A,b,c): A \in \Int A,\ b \in \Int b,\ c \in \Int c\}.
\end{align*} 
Analogously, we use $g(\A,\b,\cc)$ for the dual optimal value set. 
Notice that optimal value set $f(\A,\b,\cc)$ does not have to be an interval, it can be also disconnected. The conditions under which the optimal value function is continuous, and therefore the optimal value set connected, were addressed, e.g., in  \cite{Bee1978,MosHla2016a}.
We define lower and upper bounds of~the optimal value set as 
\begin{align*}
\IDmF  & := \inf { f(A,b,c) \mid A \in \Int A,\ b \in \Int b,\ c \in \Int c},\\
\IHmF  & := \sup { f(A,b,c) \mid A \in \Int A,\ b \in \Int b,\ c \in \Int c}.
\end{align*} 
The lower and upper bound cases are sometimes called the best and the worst case.
There always exists a scenario where the bounds are attained, implicitly in~\cite{Hla2013}. 
Thanks to this fact we can use minimum and maximum, defined on the~extended real axis, instead of infimum and supremum, respectively. 

Computation of these extremal optimal values was studied in \cite{Bee1978,ChinRam2000,Hla2009b}, among others. This problem is \NP-hard in general \cite{GabMur2010,Roh1997,Roh2006:3}, but it becomes easy for some special forms or under some stability criteria \cite{Bee1978,Hla2012a,Hla2014a,Kon2001}.

We use commonly known relationships between optimal value sets of a~primal and its dual ILP, which hold thanks to weak and strong duality in linear programming.

\begin{proposition}[Weak duality for ILP] \label{claim:weak_duality}
It holds that
$$\IDmF \ge \IDmG \, \text{ and }\, \IHmF \ge\IHmG .$$
\end{proposition}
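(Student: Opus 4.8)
The plan is to reduce the interval statement to the classical weak duality theorem of linear programming, applied scenario by scenario, and then to exploit the elementary monotonicity of the infimum and the supremum. The whole argument lives on the extended real axis $\R \cup \{-\infty, +\infty\}$, so that the degenerate cases (an infeasible or an unbounded scenario) require no special treatment beyond the usual conventions $\inf \emptyset = +\infty$ and $\sup \emptyset = -\infty$.

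First, I would fix an arbitrary scenario $(A,b,c)$ with $A \in \Int A$, $b \in \Int b$, $c \in \Int c$, and compare its primal optimal value $\Func{f}$ with the corresponding dual optimal value $\Func{g}$, where the dual is the one listed in Table~\ref{tab:dual} for the given form. By weak duality in ordinary linear programming, the minimizing primal never falls below its maximizing dual, that is $\Func{f} \ge \Func{g}$. This inequality persists in the extended cases: if the primal is unbounded then the dual is infeasible (both equal $-\infty$), if the dual is unbounded then the primal is infeasible (both equal $+\infty$), and if both are infeasible we simply get $+\infty \ge -\infty$; in every case $\Func{f} \ge \Func{g}$ holds. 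The only thing to verify here is that each primal--dual pair in Table~\ref{tab:dual} is genuinely a linear-programming dual pair, so that this classical inequality is indeed available.

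The key observation is that the pointwise inequality $\Func{f} \ge \Func{g}$ holds over one and the same index set of scenarios, since a scenario $(A,b,c)$ simultaneously determines both the primal and the dual linear program. I would then invoke monotonicity under a pointwise inequality: because $\Func{g} \ge \IDmG$ for every scenario and $\Func{f} \ge \Func{g}$, the quantity $\IDmG$ is a common lower bound for all the values $\Func{f}$, whence $\IDmF \ge \IDmG$. Symmetrically, $\Func{f} \le \IHmF$ for every scenario together with $\Func{f} \ge \Func{g}$ shows that $\IHmF$ is a common upper bound for all the values $\Func{g}$, whence $\IHmF \ge \IHmG$. Both halves of the proposition then follow.

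I do not anticipate a genuine obstacle: the substance of the statement is entirely carried by the scenario-wise weak duality of linear programming, and the passage to the interval bounds is a one-line monotonicity argument. The only points deserving a careful line are the bookkeeping on the extended real axis and the remark that the infima and suprema are in fact attained (guaranteed by the fact, recalled above, that the bounds of an optimal value set are always realized by some scenario), which lets one freely replace $\inf$ and $\sup$ by $\min$ and $\max$.
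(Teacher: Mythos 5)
Your proof is correct and follows exactly the route the paper intends: the paper states this proposition without proof as one of the ``commonly known relationships'' that ``hold thanks to weak and strong duality in linear programming,'' which is precisely your scenario-wise inequality $\Func{f}\ge\Func{g}$ combined with monotonicity of $\inf$ and $\sup$ on the extended real axis. Your careful handling of the infeasible/unbounded conventions is the only content the paper leaves implicit, and you got it right.
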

\begin{proposition}[Strong duality for ILP]\label{claim:strong_duality}
If duality gap of (\ref{generalILP}) is strongly zero, then $$\IF=\IG.$$
\end{proposition}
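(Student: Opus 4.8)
The plan is to establish the set equality directly from the definitions, observing that \emph{strongly zero duality gap} is exactly the hypothesis that forces the scenario-wise primal and dual optimal values to coincide. The whole argument reduces the interval statement to a family of ordinary linear-programming facts, one per scenario, and then re-assembles them. First I would fix an arbitrary scenario $(A,b,c)$ with $A\in\A$, $b\in\b$, $c\in\cc$. For this fixed real data there is an ordinary primal linear program with optimal value $f(A,b,c)$ and its dual with optimal value $g(A,b,c)$. Since the duality gap of $(\ref{generalILP})$ is assumed to be strongly zero, every scenario has zero duality gap, and by the characterization of zero duality gap in linear programming recalled above this gives $f(A,b,c)=g(A,b,c)$ for that scenario.

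Second, I would unwind the definitions of the two value sets. By definition $\IF$ is the image of the box $\A\times\b\times\cc$ under the map $(A,b,c)\mapsto f(A,b,c)$, and $\IG$ is the image of the same box under $(A,b,c)\mapsto g(A,b,c)$; crucially the dual program of each scenario is built from the \emph{same} triple $(A,b,c)$, so a single range over $\A\times\b\times\cc$ simultaneously parametrises both families. The first step shows that these two maps agree at every point of their common domain, hence their images coincide, which is precisely $\IF=\IG$.

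I do not anticipate a genuine obstacle: the proposition is essentially a definitional consequence of linear-programming strong duality applied scenario by scenario. The only point needing mild care is the treatment of scenarios that are infeasible or unbounded, where the optimal values lie in $\{\pm\infty\}$; here one must read \emph{zero duality gap} in the extended-real sense used in the preliminaries, so that the equality $f(A,b,c)=g(A,b,c)$ persists and the two value sets still match as subsets of the extended real axis.
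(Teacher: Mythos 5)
Your proof is correct and matches the paper's own justification: the paper states this proposition without a detailed proof, appealing precisely to scenario-wise LP strong duality, which is your argument. Your extra care with the extended-real cases (infeasible/unbounded scenarios, where zero duality gap still forces $f(A,b,c)=g(A,b,c)$ as values in $\R\cup\{\pm\infty\}$) is exactly the point that makes the pointwise equality, and hence the equality of the two image sets, go through.
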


Notice that the converse implication in Proposition \ref{claim:strong_duality} does not hold true in general. A~counterexample is shown in Example~\ref{ex:all}.

\begin{table}[t]
\begin{center}
\begin{tabular}{c@{\hspace{0.4cm}}l@{\hspace{1.0cm}}l} 
\toprule
Type & Primal ILP & Dual ILP \\
\midrule
(A) & $\min \Intt cx \mid \Int Ax=\Int b,\; x\ge 0$ & $\max \Intt by \mid \Intt Ay\le \Int c$\brule\\

(B)& $\min \Intt cx \mid \Int Ax\le\Int b$ & $\max \Intt by \mid \Intt Ay = \Int c,\; y\le 0$\brule\\

(C)& $\min \Intt cx \mid \Int Ax\le \Int b,\; x\ge 0$ & $\max \Intt by \mid \Intt Ay\le \Int c,\; y\le 0$\brule\\
\bottomrule
\end{tabular}
\end{center}
\caption{Dual programs.} \label{tab:dual}
\end{table}


\subsection{Our results}

We introduce the concept of \DG to interval linear programming. We give a~full characterization of weakly zero \DG in general  (Prop.~\ref{prop:cond:weak:full}) and derive specific forms for types (A), (B) and (C) (Cor.~\ref{cor:cond:WZDG_A}, \ref{cor:cond:WZDG_B}, \ref{cor:cond:WZDG_C}).
We obtain a full characterization of strongly zero \DG in \ILP with degenerated matrix in general (Thm.~\ref{thm:cond:deg:SZDG_strong}) and for types (A), (B), (C) (Cor.~\ref{cor:cond:deg:SZDG_A}, \ref{cor:cond:deg:SZDG_B}, \ref{cor:cond:deg:SZDG_C}).
We provide sufficient conditions and necessary conditions of strongly zero \DG in \ILP (Prop.~\ref{prop:cond:SZDG_str-feas}, \ref{prop:cond:connectivity} and \ref{thm:gap:condit:strong3}). 

We determine the computation complexity of deciding strongly and weakly zero \DG for types (A), (B) and (C), see  Table \ref{tab:complex}, and their degenerated variants, see  Table \ref{tab:degcomplex},  except for strongly zero \DG for type (C) ILP, which remains open. 

We use duality gap to improve theorems, which determine the lower and the upper bound for the optimal value set of ILP, given by Rohn~\cite{Roh2006:3}. 
We weaken assumptions of finality in both theorems and extend them to equivalent statements (Thm.~\ref{thm:rohnimprove}, \ref{thm:rohn2_improve}).

\begin{table}[t]
\begin{center}
\begin{tabular}{c@{\hspace{1cm}}c@{\hspace{1.25cm}}c} 
\toprule
Type & Weakly-\ZDG & Strongly-\ZDG\\
\midrule
(A) & \npc, Thm.~\ref{thm:np:weak} &\coh, Thm.~\ref{thm:conp:strong}\\

(B)& \npc, Thm.~\ref{thm:np:weak} & \coh, Thm.~\ref{thm:conp:strong}\\

(C)&\polynomial, Thm.~\ref{thm:weak:C} & unknown\brule\\
\bottomrule
\end{tabular}
\end{center}
\caption{Computational complexity of both strongly and weakly \ZDG.}\label{tab:complex}
\end{table}
\begin{table}[t]
\begin{center}
\begin{tabular}{c@{\hspace{1cm}}c@{\hspace{1.25cm}}c} 
\toprule
Type & Weakly-\ZDG & Strongly-\ZDG\\
\midrule
degenerated (A) & \polynomial, Thm.~\ref{thm:poly:weak} &\coh, Thm.~\ref{thm:conp:strong}\\

degenerated (B)& \polynomial, Thm.~\ref{thm:poly:weak} &\coh , Thm.~\ref{thm:conp:strong}\\

degenerated (C)& \polynomial, Thm.~\ref{thm:weak:C} & \polynomial, Thm.~\ref{thm:poly:degstrong}\brule\\
\bottomrule
\end{tabular}
\end{center}
	\caption{Computational complexity of both strongly and weakly \ZDG with degenerated matrix.}\label{tab:degcomplex}
\end{table}

\section{{Duality Gap Characterizations}}

In linear programming the duality gap is not zero if and only if the primal and its dual program are infeasible. We observe that  similar conditions hold in interval linear programming. We obtain a full characterization for weakly \ZDG, and for strongly \ZDG we have only sufficient or necessary conditions. However, for the degenerate matrix there is a full characterisation of strongly \ZDG as well.

We show that there is a difference between weakly and strongly \ZDG. 
The problem in Example \ref{ex:all} below has weakly \ZDG but not strongly \ZDG. If we change $\b$ to $[-1 , -0.5]$ and $\cc$ to $[0.5 , 1]$ we obtain an~ILP which does not even have weakly \ZDG. For $\b\df[-1 , 0]$ and $\cc\df[-1 , 0]$ we obtain an~ILP which has strongly \ZDG.
Moreover, Example \ref{ex:all} provide an ILP with all possible combinations of feasibility and infeasibility of both the primal ILP and the dual ILP.

\begin{example}\label{ex:all}
Let us have the following primal and its dual ILP where $\b\df[-1 , 0]$ and $\cc\df[-1 , 1]$.
\begin{align*}
&\min~ x_1- \cc \; x_2 \mid x_1 \le \b,\,-x_2 \le -1,\, x_1, x_2 \ge 0 \\
&\max~ \b\;y_1 -y_2 \mid y_1 \le 1,\, -y_2 \le -\cc,\, y_1, y_2 \le 0.
\end{align*}
Depending on the selection of $b\in[-1 , 0]$ and $c\in[-1 , 1]$ the ILP contains scenaria where 
\begin{enumerate}[(i)]
\item both, the primal and the dual LP are infeasible: for $b\in[-1 , 0), c\in(0 , 1], $
\item the primal unbounded and the dual LP infeasible: for $b=0,\  c \in (0, 1],$
\item the dual unbounded and the primal LP infeasible: for $b \in  [-1, 0)$, $c \in [-1, 0]$,
\item both, the primal and the dual LP are feasible: for $b=0,\  c \in [-1, 0].$
\end{enumerate}
The \DG is not zero in the first case and zero in remaining cases. Thus, DG is weakly zero but it is not strongly zero. Optimal value set, however, is the same for both the primal and the dual ILP, 
$$
f(\A,\b,\cc)
=g(\A,\b,\cc)
=\{-\infty\}\cup[0,1]\cup\{\infty\}.
$$
.
\end{example}

\subsection{ Weakly zero duality gap}

We can easily observe the following full characterization of weakly zero \DG.
\begin{proposition}\label{prop:cond:weak:full}
\DG is weakly zero if and only if the primal or the dual ILP is weakly feasible. 
\end{proposition}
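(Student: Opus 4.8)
The plan is to reduce the statement to the single-scenario fact recalled in the introduction: for an ordinary linear program, the duality gap of a scenario is zero if and only if its primal or its dual program is feasible (and equals $\infty$ otherwise). Once this is in hand, both directions of the equivalence follow by merely unwinding the definitions of \emph{weakly zero} \DG and of \emph{weak feasibility}. The observation that ties the two notions together is that a single scenario $(A,b,c)$ simultaneously instantiates both the primal and the dual program of $(\ref{generalILP})$ --- indeed, as Table~\ref{tab:dual} shows, each of $A$, $b$ and $c$ appears in both the primal and its dual --- so feasibility of either program \emph{in that scenario} already forces the gap to vanish there.

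For the implication from right to left, I would assume that the primal or the dual \ILP is weakly feasible. If the primal \ILP is weakly feasible, there is a scenario $(A,b,c)$ whose primal program is feasible; by the LP fact this scenario has zero duality gap, hence \DG is weakly zero. The argument is symmetric if instead the dual \ILP is weakly feasible: pick a scenario whose dual program is feasible and invoke the same LP fact. Note that weak feasibility of the primal and weak feasibility of the dual may well be witnessed by different scenarios, but either witness on its own already produces a zero-gap scenario, which is exactly what weak zeroness of \DG requires. For the converse, I would start from a scenario $(A,b,c)$ with zero duality gap, which exists by the assumption that \DG is weakly zero; the LP fact then guarantees that in this scenario the primal or the dual program is feasible, and consequently the primal \ILP or the dual \ILP is weakly feasible.

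I do not expect a genuine obstacle here: the whole content is carried by the classical LP characterization of zero gap, and the only point deserving care is the bookkeeping about which scenario witnesses what. Concretely, one must not conflate ``some scenario makes the primal feasible'' with ``some scenario makes the gap zero'' by accident; the two coincide precisely because the same triple $(A,b,c)$ drives both programs, so a primal-feasible (equivalently, dual-feasible) scenario is automatically a zero-gap scenario, and vice versa.
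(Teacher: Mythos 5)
Your proposal is correct and follows essentially the same route as the paper: both reduce the claim to the classical LP fact that a scenario has zero duality gap if and only if its primal or its dual program is feasible, and then unwind the definitions of weak zeroness and weak feasibility. Your version is in fact spelled out more carefully than the paper's one-line argument, particularly in tracking which scenario witnesses which property.
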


\begin{proof}
\DG is weakly zero if and only if there can not exist a scenario with an infeasible dual as well as primal program. That happens if and only if at least one of the primal or the dual ILP is weakly feasible.
\end{proof}

The following statement is a nice direct consequence of the Proposition \ref{prop:cond:weak:full} and two facts. First, the dual of \typeC is also of \typeC. Second, an interval system of \typeC  is weakly feasible if and only if the system $\ul A x\le\ol b,\, x\ge 0$ is feasible~\cite{Roh2006:2,Vaj1961}.

\begin{corollary}
\label{cor:cond:WZDG_C}
\TypeC $\min{\T c x \mid \A x\le\b,\, x\ge 0}$ has weakly \ZDG if and only if at least one of the linear systems $$\ul A x\le\ol b,\, x\ge 0\, \text{ or }\, \ult Ay\le\ol c,\,y\le0 $$ is feasible. 
\end{corollary}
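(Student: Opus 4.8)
The plan is to combine Proposition~\ref{prop:cond:weak:full} with the weak-feasibility criterion for \typeC systems recalled just above, applying that criterion once to the primal and once to the dual. By Proposition~\ref{prop:cond:weak:full}, having weakly \ZDG is equivalent to the disjunction ``the primal ILP is weakly feasible or the dual ILP is weakly feasible'', so it suffices to rewrite each of the two disjuncts as feasibility of a single real linear system, and the conjunction of the two resulting equivalences is exactly the claimed statement.

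First I would dispose of the primal. Its feasible system is precisely the \typeC interval system $\A x\le\b,\ x\ge0$, so the recalled criterion applies verbatim and shows that the primal is weakly feasible if and only if $\ul A x\le\ol b,\ x\ge0$ is feasible; this already supplies the first of the two systems in the statement. For the dual I would invoke the remark that the dual of a \typeC program, namely $\max\T\b y\mid\T\A y\le\cc,\ y\le0$, is again of \typeC form, the only structural difference being that the sign restriction on the variable is $y\le0$ rather than $y\ge0$. To put it into the exact shape covered by the recalled criterion I would substitute $y=-z$ with $z\ge0$, rewrite the feasible system as $(-\T\A)z\le\cc,\ z\ge0$, apply the criterion to this \typeC system, and then undo the substitution, producing a single real linear system in the variable $y\le0$.

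The one place that needs genuine care — and which I expect to be the main obstacle — is the bookkeeping of the interval endpoints under the transpose together with the sign flip $y\le0$. In the primal case the monotonicity of each $(Ax)_i$ in the entries of $A$ (valid because $x\ge0$) is what selects the lower endpoint $\ul A$ and forces $\ol b$ on the right-hand side; in the dual case the reversed sign restriction $y\le0$ reverses that monotonicity, so the extremal scenario minimizing each left-hand side $(\T A y)_i$ over $A\in\A$ picks out the matrix endpoint dictated by the sign of $y$, while the right-hand side is still maximized at $\ol c$. I would therefore carry out this endpoint selection componentwise to confirm that the surviving real system is exactly the second one displayed in the statement; once the two endpoints are pinned down, both equivalences feed back into Proposition~\ref{prop:cond:weak:full} and the characterization follows at once.
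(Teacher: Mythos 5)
Your plan follows exactly the same route as the paper's own (one-paragraph) proof: invoke Proposition~\ref{prop:cond:weak:full}, note that the dual of a \typeC program is again of \typeC shape, and apply the weak-feasibility criterion once to the primal and once to the dual. The primal half is fine. The problem is the dual half, precisely at the step you flag as the delicate one and then wave through: you assert that the componentwise endpoint selection will ``confirm that the surviving real system is exactly the second one displayed in the statement.'' It does not. Carry it out: for $y\le0$ each map $A\mapsto(\T A y)_j=\sum_i A_{ij}y_i$ is nonincreasing in every entry $A_{ij}$, so the scenario minimizing the left-hand side is $A=\ol A$, not $A=\ul A$. Equivalently, in your substitution $y=-z$ the interval matrix of the system $(-\T{\A})z\le\cc$, $z\ge0$ has lower endpoint $-\olt A$ (not $-\ult A$), so applying the recalled criterion and undoing the substitution yields
\begin{align*}
\olt A y\le\ol c,\quad y\le0,
\end{align*}
whereas the statement displays $\ult A y\le\ol c$, $y\le0$. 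The displayed system is what one gets by applying the criterion verbatim while ignoring the reversed sign constraint --- the very slip your own remark about reversed monotonicity warns against.

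The discrepancy is not cosmetic: the corollary as printed is false, so no proof of it can be closed. Take the $1\times1$ data $\A=[0,1]$, $\b=[-1,-1]$, $\cc=[-1,-1]$. Every primal scenario $Ax\le-1$, $x\ge0$ is infeasible (since $Ax\ge0$), but the dual scenario with $A=1$, namely $y\le-1$, $y\le0$, is feasible; hence by Proposition~\ref{prop:cond:weak:full} the duality gap is weakly zero. Yet both systems in the statement read $0\le-1$ (infeasible), so the printed criterion wrongly reports that the \DG is not weakly zero; with $\olt A$ in place of $\ult A$ the second system becomes $y\le-1$, $y\le0$, which is feasible, and the equivalence holds. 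So your proof strategy is sound and proves the corrected statement (with $\olt A y\le\ol c$, $y\le0$), but as a proof of the statement as printed it has a genuine gap at the final ``confirmation,'' and that gap cannot be repaired --- it in fact exposes an endpoint error in the paper's corollary (which the paper's own sketch, reading the dual as \typeC without accounting for $y\le0$, also commits).
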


Using weak feasibility characterisations of type (A)~\cite{Roh2006:2} and type~(B)~\cite{Ger1981,Roh2006:2} systems we obtain similar characterisations  also for type (A) and (B) ILP. However, since they are mutually dual and the weak feasibility for \typeB is expressed by an exponential reduction, we obtain more complicated characterisations.
Unless $\pP=\NP$ there is no hope for characterization by a constant number of linear systems or any polynomial-time solvable characterisation due to \NP-hardness of testing weakly \ZDG  (see section \ref{sec:comp}, Theorem \ref{thm:np:weak}).

\begin{corollary}
\label{cor:cond:WZDG_A}
\TypeA $\min{\T c x \mid \Int Ax=\Int b,\; x\ge 0}$ has weakly \ZDG if and only if at least one of the following conditions holds:
\begin{itemize}
\item[(i)] the linear system $\Dm Ax\le \Hm b,\;-\Hm Ax\le -\Dm b,\; x\ge 0$ is feasible,
\item[(ii)]  there exists $p \in \{\pm 1\}^n$ such that the linear system  $\T{( A_c - \D A\diag p)}\,y\le\Hm c,$ is feasible.

\end{itemize}
\end{corollary}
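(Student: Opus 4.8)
The plan is to invoke Proposition~\ref{prop:cond:weak:full}, which reduces weakly \ZDG to the weak feasibility of either the primal or the dual program. For \typeA the primal constraint system is $\Int Ax=\Int b,\ x\ge 0$ and, reading off Table~\ref{tab:dual}, the dual constraint system is $\T{\Int A}y\le\Int c$ with $y$ sign-unrestricted, i.e.\ a \typeB system. Hence it suffices to prove two equivalences: condition~(i) characterises weak feasibility of the primal, and condition~(ii) characterises weak feasibility of the dual. Each is an instance of a known weak-feasibility characterisation (\typeA in~\cite{Roh2006:2}, \typeB in~\cite{Ger1981,Roh2006:2}), so the work is to specialise those characterisations and match them to the stated linear systems.

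For the primal I would argue directly. Fix $x\ge 0$; since every entry of $x$ is nonnegative, the attainable set $\{Ax:A\in\Int A\}$ is exactly the box $[\Dm Ax,\Hm Ax]$, obtained by taking $A=\Dm A$ and $A=\Hm A$ entrywise. The primal is weakly feasible precisely when, for some such $x$, this box meets $[\Dm b,\Hm b]$, which happens if and only if $\Dm Ax\le\Hm b$ and $\Hm Ax\ge\Dm b$. These two inequalities together with $x\ge 0$ are exactly condition~(i), so (i) is equivalent to primal weak feasibility.

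For the dual I would run the analogous box argument on $\T{\Int A}y\le\Int c$, but now the real work lies in the fact that $y$ is unconstrained in sign. For fixed $y$ the attainable set $\{\T Ay:A\in\Int A\}$ is the box centred at $\T{A_c}y$ with radius $\T{\D A}\,|y|$ (each output coordinate depends on a disjoint column of $A$, so the coordinates vary independently); choosing $c=\Hm c$ to make the inequality easiest, the dual is weakly feasible if and only if there is a $y$ with $\T{A_c}y-\T{\D A}\,|y|\le\Hm c$. The absolute value makes this nonlinear, and removing it is the main obstacle: I would split $\R^{m}$ into orthants, so that on each orthant $|y|$ becomes a signed linear expression and the condition collapses to a finite family of linear systems indexed by a sign pattern $p$, one of which is feasible exactly when the dual is weakly feasible. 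Carrying out this reduction and propagating the signs through the transpose produces the $\diag p$-form of condition~(ii); this is precisely the exponential reduction underlying the \typeB characterisation, and it explains why---unlike the single linear system available for \typeC in Corollary~\ref{cor:cond:WZDG_C}---condition~(ii) must range over all sign patterns. The delicate points to get right are the orientation of $\diag p$ relative to the transpose and the accompanying orthant restriction on $y$; consistency can be checked against the \typeC special case, where the sign constraint $y\le 0$ already fixes a single orthant so that no enumeration is needed.
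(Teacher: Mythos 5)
Your high-level plan coincides with the paper's own justification of this corollary: apply Proposition~\ref{prop:cond:weak:full}, then plug in the known weak-feasibility characterisations of the primal system $\A x=\b$, $x\ge0$ and of the dual system $\T{\A}y\le\cc$ with $y$ free. Your primal argument is correct and yields exactly condition~(i), and your box argument for the dual is correct up to the reduction ``weakly feasible iff $\exists y:\ \T{A_c}y-\T{\D A}|y|\le\ol c$''. The failure is at the very step you yourself call delicate. Splitting over orthants of $y\in\R^m$ produces sign vectors $p\in\{\pm1\}^m$ and the systems $(\T{A_c}-\T{\D A}\diag p)\,y=\T{(A_c-\diag p\,\D A)}\,y\le\ol c$: the signs sit on the \emph{rows} of $A$, because the dual variables are indexed by rows. (Your worry about the orthant restriction is harmless: the constraint $\diag(p)\,y\ge0$ may simply be dropped, since $A_c-\diag p\,\D A\in\A$ for every $p$, so feasibility of the relaxed system already certifies weak feasibility of the dual.) Condition~(ii) as printed is a different object: there $p\in\{\pm1\}^n$ and $\T{(A_c-\D A\diag p)}=\T{A_c}-\diag p\,\T{\D A}$, i.e.\ the signs sit on the \emph{columns} of $A$. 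No bookkeeping ``propagates the signs through the transpose'' from one family to the other, so your derivation does not land on (ii).

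The mismatch is not cosmetic: the printed (ii) is not a necessary condition for dual weak feasibility, so no correct argument can produce it. Take $m=3$, $n=2$ with degenerate $\b$ and $\cc$:
\[
\A=\begin{pmatrix} {[0,0]} & {[0,0]} \\ {[1,2]} & {[-2,-1]} \\ {[1,2]} & {[-2,-1]} \end{pmatrix},\qquad
b=\T{(1,0,0)},\qquad c=\T{(-1,-1)}.
\]
Every scenario of the primal contains the equation $0=1$, so (i) fails. The dual is weakly feasible: take rows two and three of $A$ equal to $(1,-2)$ and $(2,-1)$ and $y=\T{(0,1,-1)}$; both dual constraints evaluate to $-1\le-1$. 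Hence DG is weakly zero by Proposition~\ref{prop:cond:weak:full}. Yet for every $p\in\{\pm1\}^2$ the columns of $A_c-\D A\diag p$ are $\alpha\,\T{(0,1,1)}$ and $-\beta\,\T{(0,1,1)}$ with $\alpha,\beta\in\{1,2\}$, so the system in (ii) reads $\alpha(y_2+y_3)\le-1$, $-\beta(y_2+y_3)\le-1$, which is infeasible. What your argument actually proves is the corrected corollary in which (ii) is replaced by ``there exists $p\in\{\pm1\}^m$ such that $\T{(A_c-\diag p\,\D A)}\,y\le\ol c$ is feasible'' (in the example, $p=(1,1,-1)$ works); the printed version has $\diag p$ on the wrong side. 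Note also that your proposed sanity check against \typeC would not catch this, because Corollary~\ref{cor:cond:WZDG_C} contains an analogous slip: for $y\le0$ the minimizing choice of $A$ is the upper bound, so its dual system should read $\olt Ay\le\ol c$, $y\le0$ rather than $\ult Ay\le\ol c$, $y\le0$.
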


\begin{corollary}
\label{cor:cond:WZDG_B}
\TypeB $\min{\T c x \mid \Int Ax\le\Int b}$ has weakly \ZDG if and only if 
at least one of the following conditions holds:
\begin{itemize}
\item[(i)] there exist $p \in \{\pm 1\}^n$ such that the linear system  $( A_c - \D A\diag p)\,x\le\Hm b,$ is feasible.
\item[(ii)] the linear system $\Hmt Ay\le \Hm c,\;\Dmt Ay\ge \Dm c,\; y\le 0$ is feasible. 
\end{itemize}
\end{corollary}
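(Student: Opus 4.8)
The plan is to derive Corollary~\ref{cor:cond:WZDG_B} as a direct instance of Proposition~\ref{prop:cond:weak:full}, exactly in the spirit of the preceding Corollaries~\ref{cor:cond:WZDG_C} and~\ref{cor:cond:WZDG_A}. By Proposition~\ref{prop:cond:weak:full}, the \typeB program has weakly \ZDG if and only if the primal ILP $\min{\T c x \mid \Int A x\le\Int b}$ or its dual $\max{\T b y \mid \Int{A}^\top y=\Int c,\; y\le 0}$ is weakly feasible. So the whole task reduces to spelling out what weak feasibility means for each of these two systems and matching the two resulting conditions to (i) and (ii).

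First I would handle the primal side, a \typeB system $\Int A x\le\Int b$. The cited weak feasibility characterisation for type (B) systems~\cite{Ger1981,Roh2006:2} states that $\Int A x\le\Int b$ is weakly feasible if and only if there exists a sign vector $p\in\{\pm 1\}^n$ such that $(A_c-\D A\diag p)\,x\le\Hm b$ is feasible; this is precisely condition (i). The role of $p$ is the usual orthant-splitting device: to minimise the left-hand side $A x$ over $A\in\Int A$ one picks, for each coordinate of $x$, the extreme realisation of $\A$ aligned with the sign of $x_j$, which is encoded by $A_c-\D A\diag p$, while the right-hand side is relaxed to its upper bound $\Hm b$ since we only need $\le$ to hold for \emph{some} $b\in\b$.

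Second I would treat the dual side. The dual of a \typeB program is a \typeA-like program $\Int{A}^\top y=\Int c,\; y\le 0$ (a system with interval equality constraints, an interval right-hand side, and a sign restriction on the variables), so I would invoke the type (A) weak feasibility characterisation~\cite{Roh2006:2} used already in Corollary~\ref{cor:cond:WZDG_A}. An equality $\Int{A}^\top y=\Int c$ is weakly feasible iff the pair of interval inequalities $\Int{A}^\top y\le\Hm c$ and $-\Int{A}^\top y\le-\Dm c$ admits a common solution; combined with $y\le 0$, choosing the appropriate endpoint matrices to make each inequality easiest to satisfy gives exactly $\Hmt A y\le\Hm c$ and $\Dmt A y\ge\Dm c$ with $y\le 0$, which is condition (ii). Here the sign vector collapses because $y\le 0$ fixes the relevant orientation, which is why (ii), unlike (i), needs no quantifier over $p$.

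The main obstacle is bookkeeping rather than conceptual: one must verify that the endpoint matrices appearing in (ii) are the correct ones once the sign constraint $y\le 0$ is folded into the equality's weak feasibility test, and dually that the $\diag p$ term in (i) is attached to $\D A$ with the correct sign so that $(A_c-\D A\diag p)x$ genuinely realises $\min_{A\in\Int A} Ax$ on each orthant. Since type (A) and type (B) are mutually dual, I would also double-check consistency with Corollary~\ref{cor:cond:WZDG_A} by confirming that (i) here corresponds to the dual-side condition (ii) there and vice versa, which serves as a useful internal sanity check.
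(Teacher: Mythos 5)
Your proposal is correct and follows essentially the same route as the paper: the paper also obtains Corollary~\ref{cor:cond:WZDG_B} directly from Proposition~\ref{prop:cond:weak:full} by plugging in the known weak feasibility characterisations --- the Gerlach-type sign-vector condition for the primal system $\Int Ax\le\Int b$ (giving (i)) and the Rohn-type endpoint condition for the dual system $\Intt Ay=\Int c,\;y\le0$ (giving (ii)). Your bookkeeping on the endpoint matrices and the observation that $y\le0$ eliminates the need for a sign vector in (ii) match the paper's intent exactly.
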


Based on the characterizations, we obtain the following reductions for the particular types of ILP problems.

\begin{corollary}
A type (A), (B) and (C) ILP  $\min { \T \cc x\mid x \in \mathcal M(\A, \b)}$  has weakly zero DG if and only if it has  weakly zero DG, respectively, with 
\begin{enumerate}[(i)]
\item
$c:=\Hm c$ for \typeA,
\item
$b:=\Hm b$ for \typeB,
\item
$A:=\Dm A$, $b:=\Hm b$, $c:=\Hm c$ for \typeC.
\end{enumerate}
\end{corollary}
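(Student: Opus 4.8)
The plan is to leverage the three preceding corollaries
(Cor.~\ref{cor:cond:WZDG_A}, \ref{cor:cond:WZDG_B}, \ref{cor:cond:WZDG_C}),
each of which characterizes weakly zero DG for the respective type by feasibility of
one or two linear systems. The claim is that in each of these systems the interval
data already enters only through a single extreme endpoint, so replacing the full
interval by that endpoint leaves the characterizing systems unchanged. First I would
recall that replacing an interval program by a fixed scenario only shrinks the family of
scenarios, so the ``if'' direction (weakly zero DG for the restricted program implies
weakly zero DG for the original) is immediate from the definition: a scenario witnessing
zero DG after fixing $c:=\Hm c$, etc., is also a scenario of the original ILP. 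The
content is therefore the ``only if'' direction, and this is exactly what the corollaries
deliver, because the feasibility conditions there do not see the full interval.

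For \typeA (part (i)), Cor.~\ref{cor:cond:WZDG_A} shows weak zero DG is equivalent to
feasibility of $\Dm Ax\le\Hm b,\,-\Hm Ax\le-\Dm b,\,x\ge0$ or of a system
$\T{(A_c-\D A\diag p)}y\le\Hm c$. I observe that the objective interval $\cc$ appears in
both only through its upper endpoint $\Hm c$; the matrix $\A$ and right-hand side $\b$
appear through their endpoints but those are held fixed. Hence setting $c:=\Hm c$ changes
nothing in either system, so the original and the reduced program satisfy the same
characterization, giving equivalence. The argument for \typeB (part (ii)) is dual and
fully symmetric: by Cor.~\ref{cor:cond:WZDG_B} the relevant systems involve $\b$ only
through $\Hm b$ (in $(A_c-\D A\diag p)x\le\Hm b$) while the dual side uses both endpoints
$\Hm c,\Dm c$ of the fixed interval $\cc$; fixing $b:=\Hm b$ leaves the characterization
invariant.

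For \typeC (part (iii)) I would invoke Cor.~\ref{cor:cond:WZDG_C}: weak zero DG is
equivalent to feasibility of $\Dm Ax\le\Hm b,\,x\ge0$ or of
$\ult Ay\le\Hm c,\,y\le0$. Here exactly the three endpoints $\Dm A$, $\Hm b$, $\Hm c$
occur, and each occurs only once, so simultaneously fixing $A:=\Dm A$, $b:=\Hm b$,
$c:=\Hm c$ reproduces precisely the same pair of systems; the reduced program thus has
weakly zero DG iff the original does. The main obstacle, such as it is, is not analytical
but one of bookkeeping: I must verify that in each corollary the surviving endpoint is
\emph{the same one} being fixed in the statement (upper for $\b$ and $\cc$, lower for
$\A$ in \typeC), and that the monotone direction of each inequality genuinely forces the
worst case to that endpoint---e.g. that in $\Dm Ax\le\Hm b$ with $x\ge0$ the lower matrix
$\Dm A$ and upper bound $\Hm b$ are indeed the extreme feasibility-maximizing choices.
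Once this endpoint matching is checked, the equivalence in all three cases is a direct
restatement of the corollaries.
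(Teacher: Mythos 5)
Your strategy coincides with the paper's own: the paper justifies this corollary only by the remark that it follows ``based on the characterizations,'' i.e.\ by reading it off from Corollaries~\ref{cor:cond:WZDG_A}, \ref{cor:cond:WZDG_B} and \ref{cor:cond:WZDG_C}, exactly as you propose. Your parts (i) and (ii) are sound: weak feasibility of the \typeA primal does not involve $\cc$ at all, weak feasibility of its dual $\Intt Ay\le\cc$ depends on $\cc$ only through $\Hm c$ (monotonicity of $\T Ay\le c$ in $c$), and dually for \typeB with $\Hm b$; your remark that the ``if'' direction is free because the reduced program's scenarios form a subset of the original's is also correct.

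The gap is in part (iii), precisely at the ``bookkeeping'' step you defer and assert will pass: it does not. In the dual of \typeC the variables satisfy $y\le0$, which reverses the relevant monotonicity: for $y\le0$ and $A\in\A$ one has $\T Ay\ge\olt Ay$, so the interval system $\Intt Ay\le\cc,\ y\le0$ is weakly feasible if and only if $\olt Ay\le\Hm c,\ y\le0$ is feasible --- the feasibility-maximizing endpoint is $\Hm A$, not $\Dm A$. (Corollary~\ref{cor:cond:WZDG_C}, which you quote, itself contains this error: its second system should read $\olt Ay\le\Hm c,\ y\le0$.) Consequently no single matrix can witness both sides: primal weak feasibility wants $\Dm A$, dual weak feasibility wants $\Hm A$, and statement (iii) is in fact false. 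Concretely, take scalar data $\A=[0,1]$, $\b=[-1,-1]$, $\cc=[-1,-1]$. The scenario $a=1$ has a feasible dual ($y=-1$ satisfies $y\le-1$, $y\le0$), so the original ILP has weakly \ZDG; but after fixing $A:=\Dm A=0$, $b:=\Hm b=-1$, $c:=\Hm c=-1$, both the primal system $0\cdot x\le-1,\ x\ge0$ and the dual system $0\cdot y\le-1,\ y\le0$ are infeasible, so the reduced program does not have weakly \ZDG. Your derivation is thus ``formally'' identical to the paper's and inherits the paper's error; but actually carrying out the monotonicity check that you yourself identified as the crux would have exposed it, whereas as written your proof claims that this check succeeds when it fails.
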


\subsection{Strongly zero duality gap}

We can easily observe the following propositions for strongly zero \DG. 
\begin{proposition}
\label{prop:cond:SZDG_str-feas}
 \DG is strongly zero if the primal program or its dual counterpart is strongly feasible.       
   \end{proposition}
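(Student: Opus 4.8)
The plan is to reduce this proposition to the corresponding statement in ordinary (non-interval) linear programming applied to a fixed scenario, exploiting the definition of strongly zero DG together with the background fact recalled in the paragraph on duality gap: in linear programming the duality gap is zero if and only if at least one of the primal or the dual program is feasible. First I would fix an arbitrary scenario $(A,b,c)$ with $A\in\A$, $b\in\b$, $c\in\cc$, so that the primal LP and its dual LP are genuine real linear programs forming a primal--dual pair. My goal is then to show that every such scenario has zero duality gap, which is exactly the definition of strongly zero DG.

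The key step is to use the hypothesis of strong feasibility. Suppose the primal ILP is strongly feasible; by definition this means the primal LP is feasible for \emph{every} scenario. In particular, for the fixed scenario $(A,b,c)$ the primal LP is feasible, so at least one of the primal or the dual LP of that scenario is feasible. By the linear-programming duality-gap criterion quoted above, this scenario has zero duality gap. The case where the dual ILP is strongly feasible is symmetric: strong feasibility of the dual ILP forces the dual LP of the fixed scenario to be feasible, which again makes the scenario's duality gap zero by the same criterion. Since the scenario was arbitrary, in either case all scenarios have zero duality gap, i.e.\ DG is strongly zero.

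The only mild subtlety to address is that ``strongly feasible'' must be read as feasibility of the corresponding \emph{single-sided} program (primal or dual) across all scenarios, and that the dual ranges over the dual interval data induced by $(\A,\b,\cc)$ as in Table~\ref{tab:dual}; I would make explicit that a scenario of the dual ILP is determined by the same realized $(A,b,c)$, so that primal and dual of a scenario form a matched LP pair and the classical criterion applies verbatim. No genuine obstacle arises here: the content is entirely a quantifier bookkeeping argument layered on top of the scalar LP fact. I would therefore expect the ``hard part'' to be merely cosmetic, namely stating the reduction cleanly and noting that this is only a one-directional (sufficient) condition, in contrast with the full characterization obtained for weakly zero DG in Proposition~\ref{prop:cond:weak:full}; the failure of the converse is precisely what Example~\ref{ex:all} is designed to illustrate.
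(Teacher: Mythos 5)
Your proof is correct and matches the paper's own argument: both use strong feasibility of one side to conclude that no scenario can have primal and dual simultaneously infeasible, and then invoke the LP fact that the duality gap is zero whenever at least one of the pair is feasible. The paper's version is just a more compressed statement of the same quantifier bookkeeping you spell out.
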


\begin{proof}
Since the primal or the dual ILP is strongly feasible there exist a feasible solution for all scenarios. Thus there can not be a scenario with an infeasible primal and an infeasible dual program, i.e., \DG is strongly zero.
\end{proof}

The proposition above does not hold as an equivalence in general because an \ILP can contain just only instances with primal unbounded system and dual infeasible system, and vice versa; see Example~\ref{ex:notekv}. That ILP has strongly zero \DG, but neither primal nor dual program are strongly feasible.

\begin{example}\label{ex:notekv} Let us have a following primal and its dual program.
\begin{align*}
 &\min -y_2 \quad\mid [-1,1]\, y_1-y_2 = 1,\;y_1, y_2 \ge 0\\
 &\max~ x\quad \quad\mid [-1,1]\, x\le0,\;-x\le-1
\end{align*}

It depends on the choice of a value from the interval $[-1, 1]$. For $[-1 , 0]$ the primal program is infeasible and the dual unbounded, or vice versa for $(0, 1]$.

\end{example}

\begin{proposition} \label{prop:cond:connectivity}

If the optimal value sets $\IF$ and $\IG$ are connected and it holds that 
\begin{itemize}
\item[(i)] the primal or the dual ILP is weakly feasible, and
\item[(ii)] at least one, the primal or the dual ILP, does not contain both: an infeasible and an unbounded scenario.
\end{itemize}
then \DG is strongly zero.

\end{proposition}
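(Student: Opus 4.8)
The plan is to reduce the statement to a purely combinatorial claim about scenarios and then to extract a contradiction from connectedness. Recall from the discussion preceding Example~\ref{ex:all} that in a single scenario the duality gap is nonzero precisely when the primal and the dual LP are simultaneously infeasible; hence \DG is strongly zero exactly when \emph{no} scenario is both-infeasible, and this is what I would prove. To organise the argument I would partition the scenario box $\Int A\times\Int b\times\Int c$ into four parts according to the LP outcome: $S_{\mathrm{fin}}$ (primal and dual feasible, optimum finite), $S_{\mathrm{pu}}$ (primal unbounded, dual infeasible), $S_{\mathrm{pd}}$ (primal infeasible, dual unbounded), and $S_{\mathrm{bi}}$ (both infeasible). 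Strong LP duality supplies the crucial link: on $S_{\mathrm{fin}}$ the primal and dual optima coincide, so the finite parts of $\IF$ and $\IG$ are one and the same set, call it $V$. The infinite contributions are then $-\infty\in\IF$ iff $S_{\mathrm{pu}}\neq\emptyset$ and $+\infty\in\IF$ iff $S_{\mathrm{pd}}\cup S_{\mathrm{bi}}\neq\emptyset$, and symmetrically $+\infty\in\IG$ iff $S_{\mathrm{pd}}\neq\emptyset$ and $-\infty\in\IG$ iff $S_{\mathrm{pu}}\cup S_{\mathrm{bi}}\neq\emptyset$.

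Next I would argue by contradiction, assuming some scenario lies in $S_{\mathrm{bi}}$; then $+\infty\in\IF$ and $-\infty\in\IG$. Since this scenario makes both the primal and the dual contain an infeasible member, assumption~(ii) can be met only by excluding unboundedness on one side. Up to the symmetry that exchanges the primal and its dual (and with it $\IF\leftrightarrow\IG$ and $S_{\mathrm{pu}}\leftrightarrow S_{\mathrm{pd}}$), I may assume the primal has no unbounded scenario, i.e.\ $S_{\mathrm{pu}}=\emptyset$ and hence $-\infty\notin\IF$. I would then split on whether $V$ is empty. If $V=\emptyset$, every scenario lies in $S_{\mathrm{pd}}\cup S_{\mathrm{bi}}$; here assumption~(i) excludes the degenerate possibility $S_{\mathrm{pd}}=\emptyset$ (which would make every scenario both-infeasible and so destroy weak feasibility on both sides, contradicting~(i)), so $S_{\mathrm{pd}}\neq\emptyset$ and $\IG=\{-\infty,+\infty\}$ is disconnected — contradiction.

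The remaining case $V\neq\emptyset$ is where the only real difficulty sits. Here $\IF=V\cup\{+\infty\}$, while $\IG=\{-\infty\}\cup V$ (possibly with $+\infty$). The main obstacle is that, a priori, $\{-\infty\}\cup V$ could still be connected if $V$ were unbounded below; connectedness of the value set alone does not forbid this. I would remove this obstacle using the already-noted fact that the bounds of the optimal value set are attained: since $-\infty\notin\IF$ and $\IDmF=\min\IF$ is attained, $\IDmF=\min V=:a$ is a \emph{finite} number, so $V\subseteq[a,+\infty)$. Consequently $\IG$ contains $-\infty$ and the point $a$ but no point of $(-\infty,a)$, whence $\IG$ is disconnected — the desired contradiction. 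Both branches contradict the hypotheses, so $S_{\mathrm{bi}}=\emptyset$ and \DG is strongly zero; the omitted symmetric case (dual lacking an unbounded scenario) is identical with the roles of $\IF$ and $\IG$ interchanged, which is why connectedness of \emph{both} value sets is used.
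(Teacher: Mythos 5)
Your proof is correct, and it takes a genuinely different route from the paper's. The paper argues at the level of value sets: it restates (i) as ``$\IF\neq\{\infty\}$ or $\IG\neq\{-\infty\}$'' and (ii) as ``$\IF\not\supseteq\R\cup\{\infty\}$ or $\IG\not\supseteq\R\cup\{-\infty\}$''; then, assuming a both-infeasible scenario exists, it uses (i) and connectedness to produce a real value $r$ which by strong LP duality lies in $\IF\cap\IG$, gets $[r,\infty]\subseteq\IF$ and $[-\infty,r]\subseteq\IG$ from connectedness, and transfers real values between the two sets by strong duality once more to conclude $\IF\supseteq\R\cup\{\infty\}$ and $\IG\supseteq\R\cup\{-\infty\}$, contradicting the restated (ii). You instead partition the scenarios into the four LP outcome classes and contradict connectedness directly, by exhibiting a gap in one of the value sets. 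The notable difference is your handling of the case $V\neq\emptyset$: you correctly observe that connectedness alone does not exclude $V$ being unbounded below (in which case $\{-\infty\}\cup V$ could be connected), and you close this hole with the attainment-of-bounds fact from the preliminaries, which converts ``no unbounded primal scenario'' into ``$\minn V$ finite''. The paper's proof never confronts this case explicitly; it is absorbed into the claim that (ii) ``is equal to'' the value-set condition, which is not a literal equivalence --- a set such as $\IF=(-\infty,\infty]$ contains $\R\cup\{\infty\}$ yet witnesses no unbounded scenario, so the primal can satisfy (ii) as written while violating the restated version --- and the attainment property you invoke is precisely what is needed to justify that step. So your argument proves the proposition as literally stated and is, in this respect, the tighter one; what the paper's approach buys is brevity and a clean reformulation of the hypotheses in terms of the optimal value sets.
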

\begin{proof}
Note that the assumption (i) is equal to $$\IF\neq\{\infty\} \text{ or } \IG\neq\{{-\infty}\},$$ and the assumption (ii) is equal to  $$\IF\not \supseteq \R \cup\{\infty\} \text{ or } \IG\not \supseteq \R\cup\{{-\infty}\}.$$

We prove the proposition by contradiction. Suppose that \DG is not strongly zero, thus, there exists a scenario with an infeasible both the primal and the dual program. Therefore $\infty\in f(\A,\b,\cc)$ and $-\infty\in g(\A,\b,\cc)$. Since the optimal value sets satisfy (i) and they are connected, there exists a scenario $(A, b, c)$ and a~real optimal value $r$ such that 
 $r = f(A,b,c)$ or $r=g(A,b,c)$. We obtain $r = f(A,b,c)=g(A,b,c)$ from strong duality in linear programming.
 Since the optimal value sets are connected, we have $[r,\infty]\subseteq f(\A,\b,\cc)$ and $[-\infty, r]\subseteq g(\A,\b,\cc)$. Now, we can reuse strong duality for the remaining real optimal values in $\IF$ and $\IG$. This together with connectivity leads to a contradiction with~(ii).
\end{proof}

The assumption (ii) in  Proposition \ref{prop:cond:connectivity} cannot be omitted, see Example \ref{ex:extended_real_axis}. Optimal value sets of the primal and the dual ILP are equal to the extended real axis in this example and still there is a scenario  with infeasible dual and also infeasible primal program.
\begin{example}\label{ex:extended_real_axis}
Let $\min \T \cc x \mid \A x \le\b\;$ be the following ILP
\begin{align*}
&\text{Primal ILP: } & \min [-1 , 1]x& \mid [0 , 1]x\le [-1 , 1] & &\\
&\text{Dual ILP: } & \max [-1 , 1]y&\mid [0 , 1]y=[-1 , 1],\; y\le0\\
&\text{Optimal value sets: } & \IF&=\IG=\R\cup{\{\pm\infty\}}
\end{align*}
The ILP  contains some scenaria with zero duality gap, e.g.,\ $(1 , 1 , -1),$  
and some scenaria with non-zero duality gap, e.g.,\ $(0 , -1 , -1).$  
\DG is not strongly zero even though optimal value sets are equal. 
\end{example}

We have also a necessary condition for strongly \ZDG. It gives us conditions for type (A), (B) and (C) ILP problems checkable in polynomial time. Thus, we can certify effectively that there is no strongly zero \DG in that situations.

\begin{proposition}\label{thm:gap:condit:strong3}
If an ILP has strongly zero \DG, then 
\begin{itemize}
\item[(i)] the primal ILP is weakly feasible or the dual ILP is strongly feasible, and
\item[(ii)] the primal ILP is strongly feasible or the dual ILP is weakly feasible.
\end{itemize}
\end{proposition}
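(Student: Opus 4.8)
The plan is to argue by contradiction from the defining property of strongly zero \DG. Recall that, by the linear-programming fact quoted in the introduction, a single scenario has zero duality gap precisely when it is \emph{not} the case that both its primal and its dual LP are infeasible. Hence strongly zero \DG is equivalent to the statement that \emph{no} scenario $(A,b,c)$ has simultaneously an infeasible primal and an infeasible dual LP.

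The key structural observation I would use is that, for each of the types (A), (B), (C) in Table~\ref{tab:dual} (and in fact for the general form), feasibility of the primal LP depends only on the pair $(A,b)$, while feasibility of the dual LP depends only on the pair $(A,c)$; the objective coefficients never influence feasibility, and the roles of $\b$ and $\cc$ are exchanged between the two programs. Since $\b$ and $\cc$ range over independent intervals and the constraint matrix $A$ is the only datum shared by the two feasibility conditions, I can choose $b$ and $c$ independently once a suitable common $A$ has been fixed.

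For part~(i) I would suppose it fails, i.e.\ the primal ILP is \emph{not} weakly feasible and the dual ILP is \emph{not} strongly feasible. Not weakly feasible means the primal LP is infeasible for \emph{every} choice of $A\in\A$ and $b\in\b$; not strongly feasible for the dual means there is a witnessing pair $A_0\in\A$, $c_0\in\cc$ for which the dual LP is infeasible. Fixing this $A_0$ and $c_0$ and picking any $b_0\in\b$, the scenario $(A_0,b_0,c_0)$ has both its primal LP infeasible (because the primal is infeasible for all $(A,b)$, in particular for $(A_0,b_0)$) and its dual LP infeasible (by the choice of $A_0,c_0$). This scenario therefore has nonzero duality gap, contradicting strongly zero \DG.

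Part~(ii) follows by the same argument with the roles of primal and dual (equivalently, of $\b$ and $\cc$) interchanged: assuming the primal ILP is not strongly feasible yields a witness $(A_0,b_0)$ making the primal infeasible, assuming the dual ILP is not weakly feasible makes the dual infeasible for all $(A,c)$, and combining them at $(A_0,b_0,c_0)$ for an arbitrary $c_0\in\cc$ again produces a scenario with both programs infeasible. I do not expect a genuine obstacle here; the only point that needs care is verifying the structural observation that the shared matrix $A_0$ can serve both feasibility conditions at once, which is exactly what the universal quantifier over $A$ in the failed weak-feasibility hypothesis guarantees.
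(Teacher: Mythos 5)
Your proof is correct and follows essentially the same route as the paper's: argue by contradiction, using that the negation of weak feasibility makes one program infeasible for \emph{every} scenario, so it can be combined with the witness scenario where the other program is infeasible, producing a scenario with nonzero duality gap. Your write-up is merely more explicit than the paper's about the key detail that the witnessing matrix $A_0$ simultaneously serves both infeasibility conditions (since the universally quantified infeasibility covers any $A$), a point the paper's terser proof leaves implicit.
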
    

\begin{proof}
We prove it by contradiction. Since one of the primal or the dual ILP is infeasible for each scenario and the other one is not strongly feasible, there exist a scenario with both, the primal and the dual LP infeasible. Thus \DG of this scenario is not zero which is a contradiction with strongly zero \DG.
\end{proof}

The assumptions (i) and (ii) from Proposition \ref{thm:gap:condit:strong3} are not sufficient to ensure that ILP has strongly zero \DG (see Example \ref{ex:all} where an ILP and its dual are weakly feasible). Proposition \ref{thm:gap:condit:strong3} gives us together with characterisations of weak and strong feasibility of an interval system \cite{Hla2012a,Roh2006:2,Mach1970,RohKre1994} polynomially verifiable necessary conditions for strongly zero \DG.

\begin{corollary}
If a \typeA ILP $\min{\T c x \mid \Int Ax=\Int b,\; x\ge 0}$ has strongly \ZDG, then at least one of the linear systems
$$\Dm Ax\le \Hm b,\;-\Hm Ax\le -\Dm b,\; x\ge 0\; \text{ or } \;
\Hmt Ay_1-\Dmt Ay_2\le\ul c,\;y_1, y_2\ge0 $$
is feasible. 
\end{corollary}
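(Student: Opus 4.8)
The plan is to read this off directly from Proposition~\ref{thm:gap:condit:strong3}(i) by substituting the known feasibility characterizations for the two interval systems it names. Applying part~(i) to the \TypeA program, strongly \ZDG forces that \emph{either} the primal $\Int Ax=\Int b,\ x\ge 0$ is weakly feasible \emph{or} the dual $\Intt Ay\le\Int c$ is strongly feasible. It therefore suffices to identify each of these two alternatives with the feasibility of the corresponding real linear system displayed in the statement.

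First I would handle primal weak feasibility. Since $x\ge 0$, the set $\{Ax : A\in\Int A\}$ equals the box $[\Dm Ax,\ \Hm Ax]$ componentwise, so some scenario $A\in\Int A$, $b\in\Int b$ admits a nonnegative solution of $Ax=b$ precisely when this box meets $[\Dm b,\Hm b]$, that is, when $\Dm Ax\le\Hm b$ and $-\Hm Ax\le-\Dm b$ hold together with $x\ge0$. This is the first displayed system, and it is exactly the standard weak-feasibility characterization of a \TypeA interval system \cite{Roh2006:2}.

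The substantive step is the dual condition, where strong feasibility asks that \emph{every} scenario $\T A y\le c$ with $A\in\Int A$, $c\in\Int c$ be solvable. I would first reduce to the worst-case right-hand side: since $c$ appears only on the right, all scenarios are solvable iff the scenario with $c=\ul c$ is solvable for every $A\in\Int A$. For fixed $y$ the largest attainable left-hand side is $\max_{A\in\Int A}\T A y=\T{A_c}y+\T{\D A}\,|y|$, so a single $y$ satisfies $\T A y\le\ul c$ for all $A$ exactly when $\T{A_c}y+\T{\D A}\,|y|\le\ul c$; such a $y$ solves every scenario simultaneously, and conversely the strong feasibility of this inequality-only system is known to be equivalent to the solvability of this worst-case system \cite{Roh2006:2,RohKre1994}. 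Finally I would linearize the absolute value by writing $y=y_1-y_2$ with $y_1,y_2\ge0$ its positive and negative parts: on disjoint supports $|y|=y_1+y_2$, and using $A_c+\D A=\Hm A$, $A_c-\D A=\Dm A$, the worst-case inequality turns into $\Hmt Ay_1-\Dmt Ay_2\le\ul c$, which is the second displayed system. Allowing the supports of $y_1,y_2$ to overlap cannot create new solutions, since raising $(y_1)_i$ and $(y_2)_i$ by a common amount increases the left-hand side by $2\D A\ge0$; hence the split system is feasible iff a worst-case solution exists.

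Combining these two substitutions with the disjunction from Proposition~\ref{thm:gap:condit:strong3}(i) shows that at least one of the two displayed systems is feasible, which is the assertion. I expect the only delicate point to be the middle equivalence for the dual, namely that strong feasibility (each scenario solvable, an \emph{exists--for--all} statement) of $\Intt Ay\le\Int c$ coincides with the existence of a single worst-case solution; this is precisely where the cited strong-feasibility characterization does the work, collapsing the quantifier alternation into one finite linear program.
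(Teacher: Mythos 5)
Your proposal is correct and follows essentially the same route as the paper: the corollary is obtained by applying Proposition~\ref{thm:gap:condit:strong3}(i) to the \typeA program and substituting the standard characterizations of weak feasibility of $\Int Ax=\Int b,\ x\ge0$ and strong feasibility of $\Intt Ay\le\Int c$ (the latter via Rohn's equivalence between every-scenario feasibility and the existence of a single strong solution, linearized with $y=y_1-y_2$, $y_1,y_2\ge0$). The paper merely cites these characterizations while you re-derive them, including the correct handling of the overlap in the split $y_1,y_2$ and the quantifier collapse for the inequality-only system, which you rightly flag as the one place where the cited result carries the load.
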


\begin{corollary}
If a \typeB ILP $\min{\T c x \mid \Int Ax\le\Int b}$ has strongly \ZDG, then at least one of the linear systems
$$ \Hm Ax_1-\Dm Ax_2\le\ul b,\;x_1, x_2\ge0 \; \text{ or } \;
 \Hmt Ay\le \Hm c,\;\Dmt Ay\ge \Dm c,\; y\le 0$$ 
is feasible.
\end{corollary}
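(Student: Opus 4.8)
The strategy is to obtain the two systems directly from Proposition~\ref{thm:gap:condit:strong3} by rewriting each feasibility notion it mentions as an explicit linear system. The key preliminary observation is which of its two parts to use: for a \typeB program the primal set $\Int Ax\le\Int b$ is an inequality system with free variables, whose \emph{weak} feasibility needs the orthant search over $p\in\{\pm1\}^n$ from Corollary~\ref{cor:cond:WZDG_B}(i) but whose \emph{strong} feasibility is a single linear system; dually, the dual set $\T{\Int A}y=\Int c,\ y\le0$ is an equality system whose \emph{weak} feasibility is a single linear system. Hence I would invoke part~(ii) --- ``the primal is strongly feasible or the dual is weakly feasible'' --- since this is exactly the disjunction whose two alternatives become the two polynomially checkable systems in the statement (part~(i) would instead produce the exponential-sized conditions). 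This matching of the ``cheap'' feasibility notion to each side is precisely why \typeA uses part~(i) and \typeB uses part~(ii).

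First I would translate primal strong feasibility. By the standard characterization of strong feasibility of interval inequality systems (see the references cited after Proposition~\ref{thm:gap:condit:strong3}), $\Int Ax\le\Int b$ is strongly feasible if and only if $\Hm Ax_1-\Dm Ax_2\le\ul b$ has a solution with $x_1,x_2\ge0$; the implication I actually need is the forward one, which is the substantive direction of that theorem. The reverse direction, which explains the shape of the system, is immediate: given such $x_1,x_2$, the single point $x\df x_1-x_2$ satisfies $Ax=Ax_1-Ax_2\le\Hm Ax_1-\Dm Ax_2\le\ul b\le b$ for every $A\in\Int A$ and $b\in\Int b$, using $x_1,x_2\ge0$ together with $\Dm A\le A\le\Hm A$.

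Next I would translate dual weak feasibility. Fixing $y\le0$, the value $(\T Ay)_i=\sum_jA_{ji}y_j$ ranges, as $A$ runs over $\Int A$, over the interval whose lower endpoint is $(\Hmt Ay)_i$ and whose upper endpoint is $(\Dmt Ay)_i$ --- the endpoints being swapped precisely because $y\le0$. Since different coordinates of $\T Ay$ depend on different columns of $A$, these coordinate ranges are attained independently, so $\T Ay=c$ is solvable for some $A\in\Int A$ and $c\in\Int c$ exactly when this box meets $[\Dm c,\Hm c]$ coordinatewise, that is, when $\Hmt Ay\le\Hm c$ and $\Dmt Ay\ge\Dm c$. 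Adjoining $y\le0$ gives the second system.

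Combining the two translations with Proposition~\ref{thm:gap:condit:strong3}(ii) completes the argument: strongly zero \DG forces the primal to be strongly feasible or the dual to be weakly feasible, which is feasibility of the first or of the second system, respectively. I expect the only genuinely delicate step to be the sign bookkeeping in the dual translation --- keeping straight that $y\le0$ reverses the interval endpoints of $\T Ay$, so that $\Hm A$ governs the lower bound and $\Dm A$ the upper bound --- while the one result I must import rather than reprove is the forward (hard) direction of the strong-feasibility characterization of $\Int Ax\le\Int b$ used in the second paragraph.
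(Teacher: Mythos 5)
Your proposal is correct and takes essentially the same route as the paper: the corollary is obtained there (without a spelled-out proof) exactly as you do it, by combining Proposition~\ref{thm:gap:condit:strong3}(ii) with the cited characterizations of strong feasibility of $\Int Ax\le\Int b$ and of weak feasibility of $\Intt Ay=\Int c,\ y\le0$. Your choice of part~(ii) as the relevant disjunction and your sign bookkeeping for $y\le0$ are both accurate; your elementary rederivation of the dual weak-feasibility characterization is a detail the paper simply delegates to its references.
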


\begin{corollary}
If a \typeC ILP $\min{\T c x \mid \Int Ax\le\Int b,\,x\ge0}$ has strongly \ZDG, then
at least one of the linear systems
$$\ul Ax\le\ol b,\,x\ge0  \; \text{ or }\; \ult Ay \le \ul c,\,y\le0 $$
is feasible, and also one of the linear systems
$$\ol Ax\le\ul b,\,x\ge0  \; \text{ or }\; \olt Ay \le \ol c,\,y\le0 $$
is feasible.
\end{corollary}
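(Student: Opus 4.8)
The plan is to obtain the two displayed disjunctions directly from Proposition~\ref{thm:gap:condit:strong3} by rewriting each feasibility hypothesis as an explicit linear system. Condition~(i) of that proposition states that the primal is weakly feasible or the dual is strongly feasible, and condition~(ii) states that the primal is strongly feasible or the dual is weakly feasible. Since these two conditions are exactly the two assertions of the corollary, it remains only to characterise weak and strong feasibility of the primal system $\A x\le\b,\,x\ge0$ and of the dual system $\At y\le\cc,\,y\le0$ by the standard extremal-scenario arguments (cf.\ the characterisations of weak and strong feasibility cited after Proposition~\ref{thm:gap:condit:strong3}).

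For the primal I would argue coordinatewise with $x$ fixed. As $x\ge0$, the smallest value of $Ax$ attainable over $A\in\A$ is $\ul Ax$ and the largest admissible right-hand side is $\ol b$, so the primal is weakly feasible (some scenario is feasible) exactly when $\ul Ax\le\ol b,\,x\ge0$ has a solution; this is the weak-feasibility characterisation already recalled before Corollary~\ref{cor:cond:WZDG_C}. Symmetrically, the largest value of $Ax$ is $\ol Ax$ and the smallest right-hand side is $\ul b$, so a single $x\ge0$ is feasible for every scenario, i.e.\ the primal is strongly feasible, exactly when $\ol Ax\le\ul b,\,x\ge0$ has a solution.

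For the dual I would first set $y=-z$ with $z\ge0$, which turns $\At y\le\cc,\,y\le0$ into the \typeC system $(-\At)z\le\cc,\,z\ge0$, whose interval matrix is $[-\olt A,-\ult A]$. Applying the two primal characterisations to this system and substituting back yields: the dual is weakly feasible exactly when $\olt Ay\le\ol c,\,y\le0$ is feasible, and strongly feasible exactly when $\ult Ay\le\ul c,\,y\le0$ is feasible. Combining, condition~(i) becomes ``$\ul Ax\le\ol b,\,x\ge0$ or $\ult Ay\le\ul c,\,y\le0$ is feasible'' and condition~(ii) becomes ``$\ol Ax\le\ul b,\,x\ge0$ or $\olt Ay\le\ol c,\,y\le0$ is feasible'', which are precisely the two claimed disjunctions.

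The one delicate point, and the step I would double-check, is the sign bookkeeping in the dual: the constraint $y\le0$ reverses which endpoint of each entry of $A$ is extremal, so one must be sure that weak feasibility pairs $\olt A$ with $\ol c$ while strong feasibility pairs $\ult A$ with $\ul c$, and not the reverse. Performing the reduction through the substitution $y=-z$ makes this automatic, since it defers every sign choice to the already-settled primal case and removes the temptation to flip a bound when handling the transpose together with the reversed inequality $y\le0$.
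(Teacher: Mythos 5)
Your proof is correct and follows the paper's own route: the corollary is obtained by instantiating Proposition~\ref{thm:gap:condit:strong3} with the standard characterisations of weak and strong feasibility of \typeC systems, and your sign bookkeeping (weak feasibility of the dual pairing $\olt A$ with $\ol c$, strong feasibility pairing $\ult A$ with $\ul c$) matches the statement exactly. The paper presents this corollary without a separate proof, as an immediate consequence of that proposition together with the cited feasibility characterisations, so your write-up simply fills in the same details.
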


The full characterization of strongly zero \DG remains open.

\subsection{\ILP with degenerated matrix}

Let us consider a special case of ILP --- ILP problems with degenerated matrix.
These are studied and used in practical computations since the matrix of constraints often describes a fixed structure, as in transportation problems and network flows where only capacities, right hand side, and prices, the optimization function, may vary.

We show the converse implication in Proposition \ref{prop:cond:SZDG_str-feas}  holds for  \ILP problems with degenerated matrix. This gives us a full characterization of strongly \ZDG for ILP problems with degenerated matrix. 

\begin{theorem}\label{thm:cond:deg:SZDG_strong}
    An \ILP with degenerated matrix has strongly \ZDG if and only if primal or dual programs are strongly feasible.
\end{theorem}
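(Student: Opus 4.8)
The forward implication holds in full generality and is already established in Proposition~\ref{prop:cond:SZDG_str-feas}, so the plan is to prove only the converse: that strongly \ZDG forces the primal or the dual program to be strongly feasible. The whole argument would rest on one structural feature of the degenerate setting, which I would record first. Since the constraint matrix $A$ is fixed and real, in each of the types (A), (B) and (C) the primal feasible set depends only on the right-hand side $b$, while the dual feasible set depends only on the objective vector $c$. Consequently feasibility of the primal is a property of $b$ alone and feasibility of the dual a property of $c$ alone, and these two data range independently over the intervals $\b$ and $\cc$.

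I would then prove the converse by contradiction. Assuming strongly \ZDG but that neither program is strongly feasible, failure of strong primal feasibility yields some $b_0\in\b$ making the primal infeasible, and failure of strong dual feasibility yields some $c_0\in\cc$ making the dual infeasible. Because $A$ is fixed and the two defects involve disjoint pieces of data, the single scenario $(A, b_0, c_0)$ is admissible and has both the primal and the dual program infeasible. By the linear-programming fact recalled in the introduction (duality gap is zero iff at least one of the two programs is feasible), this scenario has nonzero duality gap, contradicting the assumption of strongly \ZDG.

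Equivalently, strongly \ZDG says that for every $b\in\b$ and every $c\in\cc$ either the primal is feasible for $b$ or the dual is feasible for $c$; as $b$ and $c$ vary independently over the product $\b\times\cc$, this universally quantified disjunction collapses to ``primal feasible for all $b$'' or ``dual feasible for all $c$'', which is precisely strong feasibility of one of the programs. The only delicate point --- and the sole place where degeneracy is used --- is exactly this decoupling of the data: for a genuine interval matrix $\A$ both primal and dual feasibility depend on the \emph{common} realization of $\A$, so an independently chosen bad $b_0$ and bad $c_0$ cannot in general be fused into one scenario, which is why the converse fails without degeneracy (cf.\ Example~\ref{ex:notekv}). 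I expect no computational difficulty elsewhere; once the independence of $b$ and $c$ is isolated, the contradiction is immediate.
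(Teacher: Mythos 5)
Your proof is correct and is essentially identical to the paper's own: one direction is delegated to Proposition~\ref{prop:cond:SZDG_str-feas}, and the converse is proved by the same contradiction --- picking a $b_0$ witnessing failure of strong primal feasibility and a $c_0$ witnessing failure of strong dual feasibility, and fusing them into the single scenario $(A,b_0,c_0)$, which is legitimate exactly because the fixed real matrix makes primal feasibility depend on $b$ alone and dual feasibility on $c$ alone. Your closing remark contrasting this with the non-degenerate case (Example~\ref{ex:notekv}) is a welcome clarification but does not change the argument.
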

\begin{proof}
The backward implication holds according to Proposition \ref{prop:cond:SZDG_str-feas}.

We prove the forward implication by contradiction. Since the primal and the dual ILP are not strongly feasible, there exists a selection $b\in \Int b$ such that those scenario of the primal \ILP is infeasible. Thus each scenario $(A,b,\Int c)$ of the primal \ILP is infeasible. Analogously there exists $c \in \Int c$ such that for each scenario $(A,\Int b, c)$ the dual program is infeasible. These two selections are independent of each other. Let $(A,b,c)$ be a scenario of the \ILP, then this scenario has both dual and primal programs infeasible, i.e., duality gap of the \ILP can not be strongly zero.
\end{proof}

Theorem \ref{thm:cond:deg:SZDG_strong} together with the fact that a \typeC ILP is weakly feasible if and only if the system $\ul A x\le\ol b,\, x\ge 0$ is feasible~\cite{Fie2006,Vaj1961} give us a nice consequence for \typeC using only two classic linear programs.

\begin{corollary}\label{cor:cond:deg:SZDG_C}
A \typeC ILP $\min \Intt cx \mid Ax\le\Int b, {x\ge0}$ with degenerated matrix has strongly zero DG if and only if at least one of the systems $Ax\le\Dm b, {x\ge 0}$ and $\T Ay\le\Dm c, y\le0$ is feasible.
\end{corollary}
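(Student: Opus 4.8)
The plan is to apply Theorem~\ref{thm:cond:deg:SZDG_strong} to reduce the statement to a question about strong feasibility of the primal and dual programs, and then to convert each strong-feasibility condition into an ordinary linear feasibility test using known characterizations. First I would invoke Theorem~\ref{thm:cond:deg:SZDG_strong}: since the matrix is degenerated, the \typeC ILP $\min \Intt cx \mid Ax\le\Int b,\ x\ge0$ has strongly \ZDG if and only if the primal or the dual program is strongly feasible. So it suffices to show that, under degeneracy, strong feasibility of the primal is equivalent to feasibility of $Ax\le\Dm b,\ x\ge0$, and strong feasibility of the dual is equivalent to feasibility of $\T Ay\le\Dm c,\ y\le0$.

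The key step is the following observation about strong feasibility of a degenerate type (C) system. The primal is strongly feasible exactly when $Ax\le b,\ x\ge0$ is feasible for \emph{every} $b\in\Int b$. Because $A$ is real and the inequality direction is $\le$, the hardest right-hand side to satisfy is the smallest one, namely $b=\Dm b$: indeed if $Ax\le\Dm b$ holds for some $x\ge0$, then $Ax\le\Dm b\le b$ holds for all $b\in\Int b$, and conversely strong feasibility in particular requires feasibility for $b=\Dm b$. Hence strong feasibility of the primal is equivalent to feasibility of the single system $Ax\le\Dm b,\ x\ge0$. The same reasoning applies to the dual $\max\Intt by\mid\T Ay\le\Int c,\ y\le0$: the constraints involving the interval data are the inequalities $\T Ay\le c$ with $c\in\Int c$, so the binding worst case is again the smallest right-hand side $c=\Dm c$, giving the system $\T Ay\le\Dm c,\ y\le0$. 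Combining the two equivalences with Theorem~\ref{thm:cond:deg:SZDG_strong} yields the claimed characterization, and I would note that the stated characterization via weak feasibility of type (C) systems from \cite{Fie2006,Vaj1961} is exactly this specialization to the degenerate matrix.

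The main obstacle, and the point deserving care, is the handling of the objective interval $\Int c$ in the primal and the right-hand-side interval $\Int b$ in the dual. Strong feasibility concerns only the constraint set, not the objective, so on the primal side the interval $\Int c$ is irrelevant to primal feasibility and enters only as the right-hand side of the dual constraints; symmetrically $\Int b$ is irrelevant to dual feasibility and appears only in the primal. I would verify that the ``worst case is the smallest right-hand side'' argument is applied to the correct interval on each side, that the monotonicity direction matches the $\le$ sense of the inequalities, and that degeneracy (the matrix being a fixed real $A$) is what makes the two selections in the proof of Theorem~\ref{thm:cond:deg:SZDG_strong} decouple cleanly so that the single extreme right-hand side suffices. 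Once these matching-of-intervals details are pinned down, the corollary follows immediately.
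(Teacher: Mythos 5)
Your proposal is correct and follows essentially the same route as the paper: invoke Theorem~\ref{thm:cond:deg:SZDG_strong} to reduce strongly zero \DG to strong feasibility of the primal or the dual, and then collapse each strong-feasibility condition to a single linear system with the lower endpoint $\Dm b$ (resp.\ $\Dm c$) as right-hand side. The only difference is cosmetic: where the paper cites the feasibility characterization of type (C) systems from \cite{Fie2006,Vaj1961}, you prove the degenerate-matrix special case directly by the elementary monotonicity argument ($Ax\le\Dm b\le b$ for all $b\in\Int b$), which is exactly the content of that citation here.
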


For completeness, we also provide characterisations for types (A) and (B).
Those are directly derived using characterizations of weak and strong feasibility of an interval linear system~\cite{Hla2012a, RohKre1994, Roh1981}.
Again unless $\pP=\NP$ there is no hope for asymptotically much better characterisations due to \coNP-hardness results for types (A) and (B) with degenerated matrix (see section \ref{sec:comp}, Theorem~\ref{thm:conp:strong}).

\begin{corollary}\label{cor:cond:deg:SZDG_A}
A \typeA ILP  $\min{ \Intt cx \mid Ax=\Int b,\; x\ge 0}$ with degenerated matrix  has strongly \ZDG if and only if 
at least one of the following conditions holds:
\begin{itemize}
\item[(i)] for each $p \in \{\pm 1\}^m$ 
 the linear system $ Ax=b_c + \diag p\, \D b,\; x\ge0$ is feasible,
\item[(ii)] the linear system $\T Ay\le\ul c$ is feasible.
\end{itemize} 
\end{corollary}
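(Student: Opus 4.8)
The plan is to reduce the whole statement to Theorem~\ref{thm:cond:deg:SZDG_strong}, which already asserts that a degenerated-matrix ILP has strongly zero DG if and only if the primal or the dual program is strongly feasible. For \typeA the primal is $Ax=b,\ x\ge0$ and the dual is $\T Ay\le c$; since $A$ is fixed, primal feasibility depends only on $b$ and dual feasibility only on $c$. Hence the entire task is to rewrite ``primal strongly feasible'' as condition~(i) and ``dual strongly feasible'' as condition~(ii); the corollary then follows by substituting these two equivalences into the theorem.

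For the dual direction (condition~(ii)) I would use a simple monotonicity argument. The dual is strongly feasible if and only if $\T Ay\le c$ is solvable for every $c\in\Int c$. If $y_0$ solves $\T Ay\le\ul c$, then $\T Ay_0\le\ul c\le c$ for every $c\ge\ul c$, so the same $y_0$ works for all $c\in\Int c$; conversely, strong feasibility forces solvability at the particular value $c=\ul c$. Thus dual strong feasibility is equivalent to feasibility of the single system $\T Ay\le\ul c$, which is exactly~(ii). This part is essentially immediate.

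The genuinely interesting step, and where I expect the main (though still modest) obstacle, is the primal direction (condition~(i)). The key observation is that $Ax=b,\ x\ge0$ is solvable if and only if $b$ lies in the polyhedral cone $\{Ax : x\ge0\}$ generated by the columns of $A$, and this cone is convex. Strong feasibility demands that the whole box $\Int b$ be contained in this cone. Since a box is the convex hull of its $2^m$ vertices $b_c+\diag p\,\D b$ with $p\in\{\pm1\}^m$, and the cone is convex, containment of the box is equivalent to containment of all its vertices: if every vertex lies in the cone, then so does their convex hull, which is the box, and conversely each vertex belongs to the box. Translating back, this says precisely that $Ax=b_c+\diag p\,\D b,\ x\ge0$ is feasible for each $p\in\{\pm1\}^m$, i.e.\ condition~(i). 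The only care needed is to run both directions of the ``box inside convex cone $\iff$ all vertices inside'' equivalence, but both follow at once from convexity, with no closedness assumption required.

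Combining the two equivalences with Theorem~\ref{thm:cond:deg:SZDG_strong} completes the proof: the \typeA degenerated ILP has strongly zero DG if and only if the primal is strongly feasible (condition~(i)) or the dual is strongly feasible (condition~(ii)).
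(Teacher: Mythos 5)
Your proposal is correct and follows essentially the same route as the paper: the paper also obtains this corollary by combining Theorem~\ref{thm:cond:deg:SZDG_strong} (strongly zero DG iff the primal or the dual is strongly feasible) with characterizations of strong feasibility of the two degenerated interval systems, which it cites from the literature rather than proving. Your only departure is that you supply self-contained elementary proofs of those two characterizations --- monotonicity in $c$ for the dual inequality system, and the ``box inside a convex cone iff all $2^m$ vertices inside'' argument for the primal equality system --- both of which are sound.
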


\begin{corollary}\label{cor:cond:deg:SZDG_B}
A \typeB ILP $\min{ \Intt cx\mid Ax\le\Int b}$ with degenerated matrix  has strongly \ZDG if and only if 
at least one of the following conditions holds
\begin{itemize}
\item[(i)]the linear system $ Ax \le \Dm b$ is feasible,
\item[(ii)] for each $p \in \{\pm 1\}^m$ 
 the linear system $ \T Ay=c_c + \diag p\, \D c,\; y\le0$ is feasible.
\end{itemize}
\end{corollary}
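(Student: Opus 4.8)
The plan is to route everything through Theorem~\ref{thm:cond:deg:SZDG_strong}, which for a degenerated matrix equates strongly \ZDG with strong feasibility of the primal \emph{or} the dual program. It then suffices to prove two separate equivalences: the primal program is strongly feasible if and only if condition~(i) holds, and the dual program is strongly feasible if and only if condition~(ii) holds. Substituting both into the theorem immediately yields the corollary. The whole argument is the mirror image of the proof of Corollary~\ref{cor:cond:deg:SZDG_A}, with the inequality part and the equality part swapping roles: here the primal is an inequality system and the dual an equality system, whereas there it is the other way round.

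First I would handle the primal program $Ax\le\Int b$. Strong feasibility means that $Ax\le b$ is solvable for every $b\in\Int b$, and since the feasible set $\{x : Ax\le b\}$ only enlarges as $b$ grows componentwise, the decisive scenario is the smallest right-hand side $b=\Dm b$. If $Ax\le\Dm b$ has a solution $x^*$, then $Ax^*\le\Dm b\le b$ for all $b\in\Int b$; conversely strong feasibility forces solvability at the particular scenario $b=\Dm b$. Hence primal strong feasibility is exactly condition~(i), and this step is routine.

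Next I would handle the dual program $\T Ay=\Int c,\ y\le0$. Strong feasibility here means $\T Ay=c,\ y\le0$ is solvable for every $c\in\Int c$, i.e.\ that the box $\Int c$ is contained in the cone $K=\{\T Ay : y\le0\}$. The key step is to exploit convexity of $K$: a convex set contains a box precisely when it contains all the vertices of that box, and the vertices of $\Int c$ are exactly the points $c_c+\diag p\,\D c$ as $p$ ranges over the sign vectors. Thus dual strong feasibility amounts to solvability of $\T Ay=c_c+\diag p\,\D c,\ y\le0$ for each such $p$, which is condition~(ii).

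The hard part will be this equality case: unlike the primal inequality, there is no single worst scenario, so I must justify that testing only the finitely many vertices of $\Int c$ is enough. This reduction rests on $K$ being convex together with the fact that every point of the box is a convex combination of its vertices, so witnesses $y\le0$ for the individual vertices can be convex-combined into a witness for any interior $c$. Once both equivalences are established, feeding them into Theorem~\ref{thm:cond:deg:SZDG_strong} completes the proof.
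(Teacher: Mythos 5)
Your proof is correct and follows essentially the paper's own route: the paper also derives this corollary by plugging characterizations of strong feasibility of the primal (single worst-case scenario $b=\Dm b$, by monotonicity) and of the dual (checking the vertices of the box $\cc$, by convexity of the cone $\{\T Ay : y\le 0\}$) into Theorem~\ref{thm:cond:deg:SZDG_strong}; the only difference is that the paper cites these feasibility characterizations from the literature, whereas you prove them directly. One remark: since $c\in\IR^n$, the sign vectors in condition (ii) should range over $\{\pm 1\}^n$ rather than $\{\pm 1\}^m$ (a typo in the statement), which your vertex argument implicitly corrects.
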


Based on the above characterizations or by direct inspection, we obtain the following reductions for the particular types of ILP problems.

\begin{corollary}
A \typeA, (B) and (C) ILP $\min { \T \cc x\mid x \in \mathcal M(A, \b)} $ with degenerated matrix  has strongly \ZDG if and only if it has  strongly \ZDG, respectively, with 
\begin{enumerate}[(i)]
\item
$c:=\Dm c$ for \typeA,
\item
$b:=\Dm b$ for \typeB,
\item
$b:=\Dm b$, $c:=\Dm c$ for \typeC.
\end{enumerate}
\end{corollary}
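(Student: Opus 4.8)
The plan is to read the three reductions directly off the explicit characterizations just established for the degenerate types, since each of those isolates precisely which endpoints of the interval data the property depends on. I would handle the three cases separately but by the same mechanism.

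For \typeA I would invoke Corollary~\ref{cor:cond:deg:SZDG_A}. There, strongly \ZDG is equivalent to the disjunction of a condition that depends only on $A$ and $\b$ (feasibility of $Ax=b_c+\diag p\,\D b,\ x\ge0$ for every sign vector $p$) and a condition that depends on $\cc$ only through its lower endpoint (feasibility of $\T Ay\le\Dm c$). Neither alternative is affected when I collapse $\cc$ to the degenerate interval $[\Dm c,\Dm c]$, i.e.\ set $c:=\Dm c$, because $A$, $\b$, and $\Dm c$ are all unchanged. This yields part (i).

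The remaining parts follow the same template. For \typeB I would use Corollary~\ref{cor:cond:deg:SZDG_B}: its first alternative (feasibility of $Ax\le\Dm b$) depends on $\b$ only through $\Dm b$ and its second alternative does not involve $\b$ at all, so setting $b:=\Dm b$ leaves the disjunction unchanged. For \typeC I would use Corollary~\ref{cor:cond:deg:SZDG_C}, whose two alternatives (feasibility of $Ax\le\Dm b,\ x\ge0$ and of $\T Ay\le\Dm c,\ y\le0$) depend on the data only through $\Dm b$ and $\Dm c$ respectively; setting $b:=\Dm b$ and $c:=\Dm c$ simultaneously therefore preserves strongly \ZDG.

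The alternative ``direct inspection'' route, which I would also mention, argues straight from Theorem~\ref{thm:cond:deg:SZDG_strong}: for a degenerate matrix, strongly \ZDG is equivalent to strong feasibility of the primal or of the dual, and in each type the relevant program is feasible in every scenario exactly when a single one-sided inequality system with the extremal right-hand side is feasible. The only point needing care---and the only real obstacle in an otherwise routine argument---is the monotonicity step fixing which endpoint is extremal: since the decision variable may depend on the scenario, a solution of $Ax\le\Dm b$ (respectively $\T Ay\le\Dm c$) solves every scenario with a larger right-hand side, while the scenario at $\Dm b$ (respectively $\Dm c$) must itself be feasible. Once this direction is pinned down, the reduction to the lower endpoints is immediate and no further computation is needed.
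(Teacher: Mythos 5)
Your proposal is correct and matches the paper's intended argument: the paper justifies this corollary with exactly the two routes you give, namely reading the reductions off Corollaries~\ref{cor:cond:deg:SZDG_A}, \ref{cor:cond:deg:SZDG_B}, \ref{cor:cond:deg:SZDG_C} (each disjunct depends on the collapsed data only through the retained endpoint $\Dm c$, $\Dm b$) or by direct inspection via Theorem~\ref{thm:cond:deg:SZDG_strong} and the monotonicity of one-sided systems in their right-hand sides. Your endpoint-monotonicity details are the right ones, so nothing is missing.
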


\section{{Computational Complexity}} \label{sec:comp}

We study computational complexity of strongly and weakly \ZDG. Here, we provide the proofs of the theorems summarized in the introduction, see Table~\ref{tab:complex} for general \ILP problems and Table~\ref{tab:degcomplex} for degenerated cases.

\subsection{Weakly \ZDG}

We show the hardness of the decision whether \DG is weakly zero.

\begin{theorem}\label{thm:np:weak}
Let $\min{\ct x} \mid \A x\le \b$ be a \typeB ILP. It is \NP-complete to decide
 whether it has weakly \ZDG .
\end{theorem}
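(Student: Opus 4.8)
The plan is to prove \NP-completeness of deciding weakly \ZDG for a \typeB ILP $\min \ct x \mid \A x \le \b$ by combining membership in \NP with a reduction from a known \NP-hard problem. By Proposition~\ref{prop:cond:weak:full}, weakly \ZDG holds if and only if the primal or the dual ILP is weakly feasible, and by Corollary~\ref{cor:cond:WZDG_B} this is equivalent to a disjunction of two feasibility conditions. Membership in \NP is the easy direction: for condition~(i) a witness is the sign vector $p \in \{\pm 1\}^n$ together with a feasible point $x$ of the (polynomially sized) linear system $(A_c - \D A \diag p)x \le \Hm b$; for condition~(ii), feasibility of the fixed linear system $\Hmt A y \le \Hm c,\ \Dmt A y \ge \Dm c,\ y \le 0$ is decidable in polynomial time by linear programming, so no nondeterminism is even needed there. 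Thus a short certificate of polynomial size exists, establishing membership in \NP.

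The substantive direction is \NP-hardness. First I would pin down which piece of Corollary~\ref{cor:cond:WZDG_B} carries the complexity: since condition~(ii) is a single fixed linear system solvable in polynomial time, the hardness must come from condition~(i), the existence of an orthant $p \in \{\pm 1\}^n$ in which $(A_c - \D A \diag p)x \le \Hm b$ is feasible. This is exactly weak feasibility of the \typeB interval system $\A x \le \b$, which is the natural candidate for the source of hardness. The plan is therefore to reduce from an established \NP-complete problem about interval systems, most naturally the \NP-completeness of testing weak solvability (weak feasibility) of an interval system of inequalities, which is known in the literature (e.g.\ via the results cited for type~(B) weak feasibility in \cite{Ger1981,Roh2006:2}). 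To route this through weakly \ZDG rather than through weak feasibility directly, I would engineer the data $\b$ and $\cc$ so that condition~(ii) of Corollary~\ref{cor:cond:WZDG_B} is forced to fail: choosing $\Dm c, \Hm c$ so that the dual system $\Hmt A y \le \Hm c,\ \Dmt A y \ge \Dm c,\ y \le 0$ has no solution (for instance by an appropriate strictly positive right-hand side that is incompatible with $y \le 0$) collapses the characterization to condition~(i) alone. Then weakly \ZDG of the constructed ILP holds if and only if the given interval system $\A x \le \b$ is weakly feasible, completing the reduction.

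The main obstacle I anticipate is the second step: cleanly forcing condition~(ii) to be infeasible while preserving condition~(i) as a faithful copy of the input instance, so that the equivalence is exactly weak feasibility of the input system. I must verify that the padding of $\cc$ (and any auxiliary rows or columns added to $\A$ and $\b$) does not inadvertently make the dual system feasible, and that it does not alter the weak-feasibility status of the primal part. A standard way to guarantee dual infeasibility is to add a single coordinate whose dual constraint reads $a \le \Hm c < 0$ against $y \le 0$ in a way that is self-contradictory, or to exploit that an all-positive $\Hm c$ combined with $\Hmt A y \le \Hm c$ and $y \le 0$ can be made unsatisfiable; the bookkeeping to confirm this does not disturb the primal instance is the delicate part. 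A secondary, lesser concern is confirming that the base problem I reduce from is genuinely \NP-complete (not merely \NP-hard) so that the completeness claim is justified, but this is settled by the membership argument above together with the cited hardness of weak feasibility for type~(B) systems.
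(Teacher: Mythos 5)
Your overall architecture coincides with the paper's: membership in \NP via the certificates supplied by Corollary~\ref{cor:cond:WZDG_B}, and \NP-hardness by a reduction whose source is weak feasibility of a \typeB interval system (the paper phrases it as the complement, testing strong infeasibility, which is \coNP-hard by \cite{Roh2006:2} --- the same problem), carried out by padding the instance so that the dual is infeasible in every scenario, which collapses weakly \ZDG to weak feasibility of the primal. The genuine gap is in how you propose to kill the dual. Your primary mechanism --- choosing $\Dm c,\Hm c$ alone, e.g.\ ``a strictly positive right-hand side incompatible with $y\le0$'' --- cannot work: the dual constraints of a \typeB program are equalities $\At y=\cc$, and a positive $c$ is not in conflict with $y\le 0$ when $\A$ has negative entries. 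Concretely, for the $1\times1$ system with $\A=[-1,1]$, every real $c$ is attainable: if $c\ge0$ take $A=-1$ and $y=-c\le0$; if $c<0$ take $A=1$ and $y=c\le0$. So the dual is weakly feasible no matter how $\cc$ is chosen, and no choice of the objective interval alone can force condition~(ii) of Corollary~\ref{cor:cond:WZDG_B} to fail. Likewise your remark that an all-positive $\Hm c$ can make the system unsatisfiable is backwards: $y=0$ then satisfies $\Hmt Ay\le\Hm c$, so a nonnegative upper bound on $\cc$ makes condition~(ii) easier, not harder, to meet.

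What rescues the reduction is exactly your fallback, made concrete: you must enlarge the constraint matrix, not just $\cc$. The paper appends one fresh variable $u$ appearing only in one fresh row $u\le1$, with objective $\ct x+u$; the dual then acquires a variable $z$ and the equality constraint $z=1$, which contradicts $z\le0$ in every scenario, while the primal's feasible scenarios are exactly those of the original system because $u$ occurs in no original row. This is the ``self-contradictory dual constraint'' you allude to, and the bookkeeping you flag as delicate is precisely why the new variable must sit in its own row: a nonzero nonnegative column attached to the existing rows would let the free variable $u$ repair infeasible scenarios (e.g.\ $0\cdot x\le-1$ becomes feasible after adding $u$ with coefficient $1$), destroying the equivalence. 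With this gadget pinned down, the rest of your argument --- hardness from the cited feasibility result and membership from the corollary's polynomial-size, polynomially checkable certificate --- goes through exactly as in the paper.
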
 

\begin{proof}
We construct a polynomial-time reduction from the problem of testing strong infeasibility for an interval linear program of type (B). This problem is known to be \coNP-hard~\cite{Roh2006:2}.

The \ILP $\A x\le \b$ is strongly infeasible if and only if $\A x\le \b, u\le 1$  is strongly infeasible. Let
\begin{align}
&\min{\ct x+u}\mid\A x\le\b,\,  u\le 1, \label{proof:conp:weak:primalILP}\\
&\max{\bt y+z}\mid\At y=\cc,\,  z=1,\, y,z\le 0\label{proof:conp:weak:dualILP}
\end{align} 
be a primal and its dual program. Observe that the dual program~(\ref{proof:conp:weak:dualILP})
is infeasible for every scenario. It follows that the primal program~(\ref{proof:conp:weak:primalILP}), and also the original program,
 is not strongly infeasible (and therefore it is weakly feasible) if and only if it admits weakly \ZDG.

To prove completeness, we use Corollary~\ref{cor:cond:WZDG_B} which gives us a polynomial-size certificate checkable in polynomial time. The certificate is a specification of the case (either (i) or (ii)) and additionally a $\{\pm 1\}^n$ vector $p$ for the case (i). This specifies a linear program which can be solved, and thus the certificate checked, in linear time. 
\end{proof}

Again, the same result holds for \typeA ILP as well since type (A) and (B) are mutually dual. Thus, we can use the same reduction to prove \NP-hardness. The completeness follows from Corollary~\ref{cor:cond:WZDG_A}, provided a similar certificate as in the last proof.

\begin{theorem}\label{thm:weak:C}
Let $\min{\ct x} \mid \A x\ge\b, x\ge0$ be a \typeC ILP. It is polynomial-time solvable to decide whether it has weakly \ZDG .
\end{theorem}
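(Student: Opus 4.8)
The plan is to reduce the decision problem to checking feasibility of a constant number of ordinary linear programs, invoking Proposition~\ref{prop:cond:weak:full} together with the weak-feasibility characterization for \typeC systems. First I would recall that, by Proposition~\ref{prop:cond:weak:full}, the \typeC ILP $\min{\ct x}\mid \A x\ge\b,\,x\ge0$ has weakly \ZDG if and only if the primal ILP is weakly feasible or its dual ILP is weakly feasible. Since the dual of a \typeC program is again of \typeC (see Table~\ref{tab:dual}), both weak-feasibility tests fall under the same umbrella, exactly as in Corollary~\ref{cor:cond:WZDG_C}.

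Next I would apply the known characterization that a \typeC interval system is weakly feasible if and only if a single associated real linear system is feasible, namely the one obtained by taking the most favorable endpoints of the interval coefficients~\cite{Roh2006:2,Vaj1961}. Concretely, for the primal $\A x\ge\b,\,x\ge0$ weak feasibility reduces to feasibility of $\ol A x\ge\ul b,\,x\ge0$, and for the dual \typeC system weak feasibility reduces to feasibility of one analogous real system in the dual variable. This is precisely the content already stated in Corollary~\ref{cor:cond:WZDG_C} (adapted to the $\ge$ orientation of the present theorem).

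Having reduced the question to the feasibility of two explicit real linear systems, I would conclude by observing that feasibility of a linear system is decidable in polynomial time (e.g.\ by linear programming). The \typeC ILP thus has weakly \ZDG if and only if at least one of these two polynomially-sized real linear systems is feasible, and each such test runs in polynomial time; hence the overall decision is polynomial.

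The only subtle point — and the place I would be most careful — is verifying that the dual of the \typeC program is genuinely of \typeC so that the single-system weak-feasibility characterization applies to it as well, and correctly identifying which endpoints ($\ul{\cdot}$ versus $\ol{\cdot}$) yield the governing real system once the inequality is written in the $\ge$ form of the theorem rather than the $\le$ form of Corollary~\ref{cor:cond:WZDG_C}. This is a routine sign-flip, but getting the endpoints right is where an error would most naturally creep in; everything else is an immediate consequence of results already established in the excerpt.
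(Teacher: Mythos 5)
Your proof is correct and takes essentially the same approach as the paper: the paper's proof consists of invoking Corollary~\ref{cor:cond:WZDG_C} (which already packages Proposition~\ref{prop:cond:weak:full}, the self-duality of type (C), and the endpoint characterization of weak feasibility) and noting that it suffices to solve two classical linear programs. Your extra care about the $\ge$ versus $\le$ orientation is just the routine sign normalization you describe and does not change the argument.
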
 

\begin{proof}
The proof follows from Corollary \ref{cor:cond:WZDG_C}. It suffices to solve two classical linear programs.
\end{proof}

Moreover, testing weakly \ZDG of any type with degenerated matrix is polynomial-time solvable.

\begin{theorem}\label{thm:poly:weak}
Let be a \typeA, \typeB or \typeC ILP with degenerated matrix. It is polynomial-time solvable to decide whether it has weakly \ZDG . 
\end{theorem}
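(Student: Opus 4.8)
The plan is to read off the result directly from the weak-feasibility characterizations in Corollaries \ref{cor:cond:WZDG_A}, \ref{cor:cond:WZDG_B} and \ref{cor:cond:WZDG_C}, exploiting the single structural consequence of degeneracy: the radius vanishes, $\D A=0$, and hence $A_c=A$. Among the conditions in those corollaries the only ingredient that is not manifestly polynomial is the disjunction over all sign vectors $p\in\{\pm1\}^n$ occurring in condition (ii) of Corollary \ref{cor:cond:WZDG_A} and in condition (i) of Corollary \ref{cor:cond:WZDG_B}. The whole argument consists in observing that this disjunction collapses for a degenerated matrix.

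For \typeC the statement is immediate and does not even use degeneracy: by Corollary \ref{cor:cond:WZDG_C} (equivalently Theorem \ref{thm:weak:C}) weakly \ZDG is characterized by feasibility of one of two ordinary linear systems, each solvable in polynomial time.

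For \typeA I would invoke Corollary \ref{cor:cond:WZDG_A}. Its condition (i) is already a single linear system. In condition (ii) the parametrized matrix $\T{(A_c-\D A\diag p)}$ equals $\T A$ for every $p$ once $\D A=0$, so the search over $p\in\{\pm1\}^n$ is rendered trivial and condition (ii) reduces to feasibility of the single system $\T Ay\le\Hm c$. Deciding weakly \ZDG therefore amounts to solving two linear programs. The case of \typeB is symmetric through Corollary \ref{cor:cond:WZDG_B}: condition (i) collapses to feasibility of $Ax\le\Hm b$ (again because $A_c-\D A\diag p=A$ independently of $p$), while condition (ii), $\Hmt Ay\le\Hm c,\,\Dmt Ay\ge\Dm c,\,y\le0$, is a single linear system; so here too two linear programs suffice.

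I do not expect a genuine obstacle. Once the sign-vector search is gone, every surviving condition is a plain linear feasibility problem and the outer disjunctions range over a constant number of alternatives, which makes the test polynomial. The only point deserving a line of care is to confirm that setting $\D A=0$ in the corollaries produces exactly the collapsed systems written above, and that both directions of those equivalences remain valid in the degenerate case --- which they do, since Corollaries \ref{cor:cond:WZDG_A}--\ref{cor:cond:WZDG_C} are stated for arbitrary interval matrices, degenerate ones included.
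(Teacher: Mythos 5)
Your proof is correct, and it is worth noting how it differs from the paper's. The paper's proof is a two-line reduction: it invokes Proposition~\ref{prop:cond:weak:full} (weakly zero DG holds if and only if the primal or the dual ILP is weakly feasible) and then cites an external result that testing weak feasibility of type (A), (B) and (C) systems with degenerated matrix is polynomial. You reach the same conclusion self-containedly: starting from Corollaries~\ref{cor:cond:WZDG_A}--\ref{cor:cond:WZDG_C} (which are themselves instantiations of Proposition~\ref{prop:cond:weak:full} via known weak-feasibility characterizations), you identify the sole super-polynomial ingredient --- the disjunction over sign vectors $p\in\{\pm1\}^n$ --- and observe that degeneracy $\D A=0$ collapses it, since $A_c-\D A\diag p=A$ for every $p$. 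This is exactly the reason the externally cited fact is true for degenerated matrices, so in substance the two arguments establish the same reduction; yours buys an explicit algorithm (solve two ordinary linear programs per type) and removes the dependence on the black-box citation, while the paper's buys brevity. Your checks are sound: condition (i) of Corollary~\ref{cor:cond:WZDG_A} and condition (ii) of Corollary~\ref{cor:cond:WZDG_B} are single linear systems regardless of degeneracy, the sign-vector conditions each collapse to one system, and the type (C) case needs no degeneracy at all, being already covered by Theorem~\ref{thm:weak:C}.
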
 

\begin{proof}
Since testing weak feasibility of types (A), (B) and (C) ILP with degenerated matrix is polynomial~\cite{Elif} the testing weakly zero \DG is according to Proposition~\ref{prop:cond:weak:full} polynomial, too. 
\end{proof}

\subsection{Strongly \ZDG}

We follow a~similar schema of the reduction as in the weak case.

\begin{theorem}\label{thm:conp:strong}
Let $\min{\ct x} \mid \A x=\b, x\ge0$ be a \typeA ILP. It is  \coNP-hard to decide whether it has strongly \ZDG . Moreover, the problem remains \coNP-hard even for degenerated matrix $\A$. 
\end{theorem}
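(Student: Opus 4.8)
The plan is to prove \coNP-hardness already in the degenerated case; since every \ILP with a real (degenerated) matrix is in particular a \typeA \ILP, this single reduction establishes both assertions of the theorem at once. I would reduce from the problem of deciding \emph{strong feasibility} of a \typeA interval system $A x=\Int b,\ x\ge 0$ with a real matrix $A$ and interval right-hand side $\Int b$, i.e.\ whether $\{x\ge 0:Ax=b\}\neq\emptyset$ for every $b\in\Int b$. Equivalently this asks whether the box $\Int b$ lies in the finitely generated cone $\{Ax:x\ge 0\}$; its complement has a single sign vector $p\in\{\pm 1\}^m$ as a polynomial certificate (a violated vertex $b_c+\diag p\,b_\Delta$, membership in the cone being an LP), so the problem is in \coNP, and it is \coNP-hard~\cite{Roh1997,Roh2006:3,GabMur2010}.

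Given such an instance $(A,\Int b)$ with $A\in\R^{m\times n}$, I would build a degenerated \typeA \ILP whose \emph{dual} is infeasible in \emph{every} scenario, so that by Corollary~\ref{cor:cond:deg:SZDG_A} the strongly \ZDG property collapses onto primal strong feasibility. Concretely, I append one equation and two nonnegative variables $x_{n+1},x_{n+2}$ through the real matrix
\begin{align*}
\widehat A=\begin{pmatrix}A & 0 & 0\\ 0 & 1 & -1\end{pmatrix}\in\R^{(m+1)\times(n+2)},
\end{align*}
set the appended right-hand side entry to the degenerate interval $[0,0]$, fix the appended cost entries to $c_{n+1}=c_{n+2}=-1$, and leave the original cost entries as arbitrary fixed reals (say $0$). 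Since $\widehat A$ is real, the constructed program is degenerated, and the whole construction is clearly polynomial.

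The two appended variables contribute the dual constraints $(\widehat A^\top y)_{n+1}=y_{m+1}\le -1$ and $(\widehat A^\top y)_{n+2}=-y_{m+1}\le -1$, which are contradictory; hence $\widehat A^\top y\le\ul{\widehat c}$ is infeasible, so condition~(ii) of Corollary~\ref{cor:cond:deg:SZDG_A} fails (indeed the dual is infeasible in every scenario). For condition~(i), because the appended columns vanish in the first $m$ rows and $A$ vanishes in row $m+1$, the system $\widehat A x=\widehat b_c+\diag p\,\widehat b_\Delta,\ x\ge 0$ decouples: the last equation reads $x_{n+1}-x_{n+2}=0$ and is met by $x_{n+1}=x_{n+2}=0$ for every sign pattern (its radius entry is $0$), while the first $m$ equations are exactly $Ax=b_c+\diag{p'}\,b_\Delta$ for $p'$ the restriction of $p$ to the original rows. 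Thus condition~(i) holds for $\widehat A$ iff $Ax=b,\ x\ge 0$ is feasible at every vertex of $\Int b$, i.e.\ iff the input instance is strongly feasible. By Corollary~\ref{cor:cond:deg:SZDG_A} the constructed degenerated \typeA \ILP has strongly \ZDG exactly when the input system is strongly feasible, which completes the reduction.

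The main obstacle is the augmentation step: one must force the dual to be infeasible in \emph{all} scenarios while leaving primal strong feasibility completely untouched, so that the hard condition is transported unchanged into the strongly \ZDG question. The $\pm 1$ gadget attached to a fresh, radius-zero equation achieves precisely this decoupling, and the routine but essential verifications—that it neither helps nor harms primal feasibility, that it renders condition~(ii) of Corollary~\ref{cor:cond:deg:SZDG_A} false, and that the reduction preserves the degenerated (real-matrix) structure—form the technical heart of the argument.
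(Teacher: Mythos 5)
Your proposal is correct and follows essentially the same route as the paper: both reduce from strong feasibility of a \typeA interval system (\coNP-hard even with real matrix), both append a gadget equation $u-v=0$ with two fresh nonnegative variables whose cost coefficients make the corresponding dual constraints contradictory, so the dual is infeasible in every scenario and strongly \ZDG collapses to primal strong feasibility. Your only cosmetic deviations---phrasing the equivalence via Corollary~\ref{cor:cond:deg:SZDG_A} rather than directly from the definition, and carrying out the reduction inside the degenerated class so that both claims follow at once---do not change the substance of the argument.
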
 

\begin{proof}
We construct a polynomial-time reduction from the problem of testing strong feasibility for \typeA ILP. This problem is known to be \coNP-hard \cite{Roh2006:2} even with degenerated matrix~\cite{Elif}.

The interval system $\A x=\b,\; x\ge0$ is strongly feasible if and only if the interval system $\A x=\b,\; u-v=0,\; x,u,v\ge0$ is strongly feasible.
Let
\begin{align}
&\min{\ct x+u-2v}\mid\A x=\b,\; u-v=0,\; x,u,v\ge0, \label{proof:conp:strong:primalILP}\\
&\max{\bt y}\mid\At y\le\cc,\; z\le1,\; -z\le-2,\label{proof:conp:strong:dualILP}
\end{align} 
be a primal and its dual program. Observe that the ILP (\ref{proof:conp:strong:dualILP})
 is infeasible for every scenario. It follows that the ILP (\ref{proof:conp:strong:primalILP})
 is strongly feasible if and only if it admits strongly \ZDG.
\end{proof}

The same result holds for \typeB ILP as well since type (A) and (B) are mutually dual and then we can use the same reduction to prove \coNP-hardness. 
To proceed with the reduction it is necessary to observe that the minimization and the maximization ILP has the same complexity with respect to the determination of \ZDG. 

We remark that we cannot easily extend the hardness result to a completeness result as in the weakly \ZDG. A straightforward certificate, that is a scenario which proves a non-existence of strongly \ZDG,  does not have to be a polynomially-large certificate because it could contain an arbitrary real value.
Instead, it could be interesting to develop a characterization similar to Corollaries~\ref{cor:cond:WZDG_A} and \ref{cor:cond:WZDG_B}.

According to Corollary~\ref{cor:cond:deg:SZDG_C} it is sufficient to solve two linear programs to decide whether \DG is strongly zero for a \typeC ILP with degenerated matrix. Thus, it can be decided in polynomial time.

\begin{theorem}\label{thm:poly:degstrong}
Let be a \typeC ILP with degenerated matrix. It is polynomial-time solvable to decide whether it has strongly \ZDG.
\end{theorem}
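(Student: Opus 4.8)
The plan is to reduce the decision problem directly to the equivalent characterization already established in Corollary~\ref{cor:cond:deg:SZDG_C}. That corollary states that a \typeC ILP with degenerated matrix has strongly \ZDG if and only if at least one of the two linear systems
\begin{align*}
Ax\le\Dm b,\ x\ge 0 \quad\text{or}\quad \T Ay\le\Dm c,\ y\le 0
\end{align*}
is feasible. Hence deciding strongly \ZDG amounts to deciding the feasibility of (at most) these two systems, and the whole argument consists of turning this characterization into an algorithm and bounding its running time.

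The remaining step is to observe that each of these is an ordinary system of linear inequalities over the reals, and feasibility of such a system is a standard linear-programming feasibility problem. Concretely, one would run a polynomial-time LP feasibility routine (e.g.\ the ellipsoid method or an interior-point method, or Phase~I of the simplex framework) on the first system; if it reports feasibility, the program has strongly \ZDG and we accept. Otherwise one repeats the test on the dual-side system $\T Ay\le\Dm c,\ y\le 0$, accepting exactly when it is feasible and rejecting otherwise. Since we solve at most two LP feasibility instances, each of size polynomial in the input — the bounds $\Dm b$, $\Dm c$ and the real matrix $A$ are read off directly from the interval data — the whole procedure runs in polynomial time.

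There is essentially no genuine obstacle at this stage: all the structural difficulty, namely the equivalence of strongly \ZDG with feasibility of exactly these two systems and in particular the reduction from an infinite family of scenarios to two fixed real programs, is already carried by Theorem~\ref{thm:cond:deg:SZDG_strong} and Corollary~\ref{cor:cond:deg:SZDG_C}. The only point worth stating explicitly is that passing to the degenerated (real) matrix $A$ together with the fixed right-hand side $\Dm b$ and cost $\Dm c$ removes all interval uncertainty, so that no enumeration over scenarios or over sign vectors $p\in\{\pm 1\}$ is needed, in contrast with the type (A) and (B) degenerated characterizations of Corollaries~\ref{cor:cond:deg:SZDG_A} and \ref{cor:cond:deg:SZDG_B}. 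This is precisely what keeps the complexity polynomial and separates type (C) from the \coNP-hard cases of Theorem~\ref{thm:conp:strong}.
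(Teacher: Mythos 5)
Your proposal is correct and matches the paper's own argument: the paper likewise invokes Corollary~\ref{cor:cond:deg:SZDG_C} and notes that deciding strongly zero \DG reduces to solving the two ordinary linear programs $Ax\le\Dm b,\ x\ge 0$ and $\T Ay\le\Dm c,\ y\le0$, each solvable in polynomial time. Your additional remarks on why no enumeration over scenarios or sign vectors is needed are a faithful elaboration of the same idea, not a different route.
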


\section{{Improvements of Duality Theorems Using Duality Gap}}

We can use \DG to generalize one of the basic theorems about lower and upper bounds for the optimal value set of ILP, which is a form of strong duality. 

\begin{theorem}[\cite{Roh2006:3}] \label{thm:Rohn} 
Let $\min \ct x\mid \A x=\b,\; x\ge0 $ be a~type~(A) ILP. 
If $\IHmF$ is finite then (\ref{thm:horni_mez_Rohn}) holds, 
if $\IDmF$ is finite then  (\ref{thm:dolni_mez_Rohn}) holds.
\begin{align}
&\IHmF=\maxx{\left\{\maxx_{b \in \b }{\ \T by} \mid \exists A \in \A, \exists c \in \cc: \T Ay \le c \right\}}.\label{thm:horni_mez_Rohn}\\
&\IDmF=\maxx{\left\{\minn_{b \in \b }{\ \T by} \mid \forall A \in \A, \forall c \in \cc: \T Ay \le c \right\}}. \label{thm:dolni_mez_Rohn}
\end{align}
\end{theorem}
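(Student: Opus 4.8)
The plan is to reduce everything to classical linear-programming (LP) duality applied scenario by scenario. The LP dual of the type~(A) primal $\min \ct x\mid Ax=b,\,x\ge0$ is $\max \T b y\mid \T A y\le c$ (with $y$ free), so whenever a scenario has finite optimal value, strong LP duality gives $f(A,b,c)=\max\{\T b y:\T A y\le c\}$. The two finiteness hypotheses keep us inside this regime: if $\IHmF$ is finite then no scenario is primal-infeasible (an infeasible scenario would contribute $+\infty$ to the supremum), and if $\IDmF$ is finite then no scenario is unbounded (an unbounded scenario would contribute $-\infty$ to the infimum). Combined with the cited fact that the extremal values are attained, this lets me interchange the scenario selection with the inner $\min$/$\max$ freely.

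For the upper bound~(\ref{thm:horni_mez_Rohn}) I would argue directly, as it involves only maximizations. Writing $\IHmF=\max_{A,b,c}f(A,b,c)=\max_{A,b,c}\max\{\T b y:\T A y\le c\}$ and observing that all these maxima commute, I pull the selection of $b$ inside as $\max_{b\in\b}\T b y$, while the selections of $A$ and $c$ only need to make $y$ dual-feasible for \emph{some} scenario --- which is exactly the side condition $\exists A\in\A,\,\exists c\in\cc:\T A y\le c$. No minimax issue arises and~(\ref{thm:horni_mez_Rohn}) follows.

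The lower bound~(\ref{thm:dolni_mez_Rohn}) is the heart of the matter; here I would instead collapse the whole family to a \emph{single} classical LP on the primal side. Since $x\ge0$, the map $c\mapsto \T c x$ is nondecreasing, so the joint minimum over $(A,b,c,x)$ is attained at $c=\ul c$; and for fixed $x\ge0$ the system $Ax=b$ admits a solution with $A\in\A,\,b\in\b$ precisely when $\ul A x\le\ol b$ and $\ol A x\ge\ul b$ (each coordinate of $Ax$ sweeps $[(\ul A x)_i,(\ol A x)_i]$, which must meet $[\ul b_i,\ol b_i]$). Hence $\IDmF=\min\{\ul c^\top x:\ul A x\le\ol b,\ \ol A x\ge\ul b,\ x\ge0\}$, a single LP. I would then dualize it, obtaining a program in two sign-restricted multiplier vectors $\pi_1,\pi_2\le0$, and pass to the free variable $y=\pi_1-\pi_2$.

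The last passage is the step I expect to be the main obstacle: the dual naturally lives in $2m$ variables, whereas the target formula maximizes over a single free $y$. I would close the gap by a cancellation argument --- whenever some coordinate has both split-parts positive, decreasing both by the same amount relaxes the constraint (because the radius $\D A\ge0$) and does not decrease the objective (because the radius $\D b\ge0$), so an optimal dual solution may be taken complementary. After this reduction the dual objective becomes $\min_{b\in\b}\T b y$ and its constraint becomes exactly ``$\T A y\le c$ for all $A\in\A,\,c\in\cc$'' (equivalently $A_c^\top y+A_\Delta^\top|y|\le\ul c$), so strong LP duality for the collapsed program (finite by hypothesis) yields~(\ref{thm:dolni_mez_Rohn}). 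The reverse inequality, that the right-hand side of~(\ref{thm:dolni_mez_Rohn}) is at most $\IDmF$, is by contrast immediate from weak LP duality: any $y$ feasible for every scenario satisfies $\T b y\le f(A,b,c)$ for all $(A,b,c)$, whence $\min_{b\in\b}\T b y\le\IDmF$. Thus the entire substance lies in the collapse-and-cancel direction.
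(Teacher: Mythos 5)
Your proposal is correct and follows essentially the same path as the paper: Theorem~\ref{thm:Rohn} itself is only cited (from \cite{Roh2006:3}), but the paper's proofs of its sharpened versions, Theorems~\ref{thm:rohnimprove} and~\ref{thm:rohn2_improve}, use exactly your two arguments --- commuting suprema with scenario-wise LP duality for~(\ref{thm:horni_mez_Rohn}), and, for~(\ref{thm:dolni_mez_Rohn}), collapsing $\IDmF$ to the single LP $\min \{\ul c^\top x : \ul A x\le \ol b,\ \ol A x\ge \ul b,\ x\ge 0\}$, dualizing into two sign-restricted vectors $y_1,y_2$, and mapping the dual feasible pairs onto strong solutions of $\T{\A} y\le \cc$. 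The only difference is that you re-derive two ingredients the paper cites as known: the collapsed-LP formula for $\IDmF$ (from \cite{Hla2012a,Roh2006:3}) and the complementary-splitting characterization of strong solutions (from \cite{Roh2006:2}), which is precisely your cancellation step.
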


Notice that equations (\ref{thm:horni_mez_Rohn}) and (\ref{thm:dolni_mez_Rohn})  can be equivalently written as 
\begin{align*}
&\IHmF=\maxx{\left\{\maxx_{b \in \Int b }{\left\{{\T by}\right\} }\mid y\in \bigcup_{A \in \Int A,\, c \in \Int c}{\left\{y\colon \T Ay \le c\right\}} \right\}},\\
&\IDmF=\maxx{\left\{\minn_{b \in \Int b}{\left\{ \T by \right\}} \mid y\in \bigcap_{A \in \Int A,\, c \in \Int c}{\left\{y\colon\T Ay \le c\right\}} \right\}}.
\end{align*}

We show that the assumption of finiteness in Theorem \ref{thm:Rohn} can be weakened. We also obtain a~full characterization of $\IHmF$ and $\IDmF$ and show that strongly \ZDG is a sufficient assumption to have the equalities (\ref{thm:horni_mez_Rohn})~and~(\ref{thm:dolni_mez_Rohn}).

\begin{theorem} \label{thm:rohnimprove}
Let $ \min{\ct x} \mid \A x=\b, x\ge0$ be a \typeA ILP.  Then 
\begin{align} \label{thm:int:horni_mez_Rohn_improve}
\IHmF=\maxx{\left\{(\maxx_{b \in \Int b }{\ \T by}) \mid \exists A \in \Int A, \exists c \in \Int c: \T Ay \le c \right\}} 
\end{align}
if and only if the ILP satisfies at least one of the conditions: 
\begin{itemize}
\item[(i)] it has strongly zero \DG ,
\item[(ii)] $\IHmG=\infty$, i.e., the dual ILP has an unbounded scenario.
\end{itemize} 
\end{theorem}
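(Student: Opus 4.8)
The plan is to recognise that the right-hand side of~(\ref{thm:int:horni_mez_Rohn_improve}) is nothing but the upper bound $\IHmG$ of the \emph{dual} optimal value set, after which the claimed equivalence collapses to a comparison between $\IHmF$ and $\IHmG$ governed by weak and strong duality.

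First I would prove the identity
\[
\maxx{\left\{(\maxx_{b \in \Int b }{\ \T by}) \mid \exists A \in \Int A, \exists c \in \Int c: \T Ay \le c \right\}}=\IHmG .
\]
For a fixed scenario $(A,b,c)$ the dual value is $g(A,b,c)=\sup\{\T by : \T Ay\le c\}$, so $\IHmG=\sup_{A,c}\sup_{b\in\Int b}\sup\{\T by : \T Ay\le c\}$. Because the feasible set $\{y:\T Ay\le c\}$ does not depend on $b$, the suprema over $b$ and over $y$ commute, turning the inner two suprema into $\sup\{\sup_{b\in\Int b}\T by : \T Ay\le c\}$; finally, a supremum over the family of feasible regions indexed by $(A,c)$ equals the supremum over their union $\bigcup_{A\in\Int A,\,c\in\Int c}\{y:\T Ay\le c\}$, which is precisely the right-hand side.

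Given this identity, (\ref{thm:int:horni_mez_Rohn_improve}) is equivalent to $\IHmF=\IHmG$, and weak duality (Proposition~\ref{claim:weak_duality}) always supplies $\IHmF\ge\IHmG$. The backward implication is then immediate: strongly zero \DG yields $\IF=\IG$ by strong duality (Proposition~\ref{claim:strong_duality}), hence $\IHmF=\IHmG$; and $\IHmG=\infty$ forces $\IHmF=\infty=\IHmG$ through $\IHmF\ge\IHmG$. For the forward implication I would argue contrapositively: assume $\IHmF=\IHmG$ and that (ii) fails, so $\IHmG<\infty$ and hence $\IHmF<\infty$. If the ILP lacked strongly zero \DG there would be a scenario with both primal and dual infeasible; an infeasible primal of a minimization program has optimal value $+\infty$, so $\infty\in\IF$ and $\IHmF=\infty$, a contradiction. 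Thus (i) must hold, and the equivalence is complete.

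I expect the main obstacle to be the careful bookkeeping in the identity above under the extended-real conventions: an infeasible dual scenario contributes $-\infty$ and is silently dropped by passing to the union of feasible regions, while a single unbounded dual scenario forces both sides to $+\infty$; one must check that these degenerate cases are handled consistently by the supremum manipulations and that the extremal values are genuinely attained, as recorded before Theorem~\ref{thm:Rohn}. Everything after the identity is a short duality argument that reuses Propositions~\ref{claim:weak_duality} and~\ref{claim:strong_duality} together with the fact that nonzero \DG in a scenario is synonymous with simultaneous primal and dual infeasibility.
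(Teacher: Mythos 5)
Your proposal is correct and follows essentially the same route as the paper's own proof: identify the right-hand side of (\ref{thm:int:horni_mez_Rohn_improve}) with $\IHmG$, get the backward implication from Propositions~\ref{claim:weak_duality} and~\ref{claim:strong_duality}, and get the forward implication by noting that a scenario with both programs infeasible would force $\IHmF=\infty$ and hence $\IHmG=\infty$, contradicting the failure of (ii). The only difference is that you spell out the supremum-commutation and empty-feasible-set bookkeeping behind the identity, which the paper states without detail.
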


\begin{proof}
Let us denote $\ol f\df\IHmF$ and $\ol g\df\IHmG$. We show ($\ref{thm:int:horni_mez_Rohn_improve}$) says that $\ol f=\ol g$. We have 
\begin{align*}
\ol g
&=\maxx_{A \in \A,\; b \in \b,\; c \in \cc }{\left\{(\maxx{\ \T by}) \mid  \T Ay \le c \right\}} \\
&=\maxx\left\{(\maxx_{b \in \Int b }{\ \T by}) \mid \exists A \in \Int A, \exists c \in \Int c: \T Ay \le c \right\}.
\end{align*}

We prove the backward implication. For ILPs which satisfy the assumption~(i) we use strong duality (Proposition \ref{claim:strong_duality}) which states $\ol f$=$\ol g\, $. For assumption (ii), we use weak duality (Proposition~\ref{claim:weak_duality}) which states $\ol f\ge\ol g$, to show $\ol f=\infty=\ol g$.

We prove the forward implication by contradiction. Since \DG is not strongly zero, there exist a scenario with infeasible both primal and dual program. Thus $\ol f=\infty$.  Since $\ol f=\ol g$, we obtain $\ol g=\infty$. This is a contradiction (with the negation of (ii)).
\end{proof}

We are not using the properties of \typeA in the proof of Theorem \ref{thm:rohnimprove}, except of the property that there is the dual form of the given ILP in the right hand side of (\ref{thm:int:horni_mez_Rohn_improve}). Thus Theorem \ref{thm:rohnimprove} with the appropriate modification:
$$ 
\IHmF=\maxx{\left\{(\maxx_{b \in \Int b }{\ \T by}) \mid \exists A \in \Int A, \exists c \in \Int c: y\in {\mathcal{N}(A,c)} \right\}} 
$$ 
where $\mathcal{N}(A,c)$ is the feasible set of the dual program, 
 holds for other types as well.

Weakly \ZDG is not sufficient to fulfill equation (\ref{thm:int:horni_mez_Rohn_improve}), see Example~\ref{ex:all} with changed intervals $\b$~and $\cc$ to $\b\df[-1 , 0 ]$ and $\cc \df[0.5 , 1]$. The upper bounds of optimal value sets are $\IHmF=\infty$ and $\IHmG=-\infty$.

\begin{theorem}\label{thm:rohn2_improve}
Let $ \min{\ct x} \mid \A x=\b, x\ge0$ be a \typeA ILP.  Then 
\begin{align} \label{thm:int:dolni_mez_Rohn_improve}
\IDmF=\maxx{\left\{(\minn_{b \in \b }{\ \T by}) \mid \forall A \in \A, \forall c \in \cc: \T Ay \le c \right\}}
\end{align}
if and only if the primal ILP is weakly feasible or the dual ILP is strongly feasible.
    \end{theorem}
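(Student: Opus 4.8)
The plan is to mirror the proof of Theorem~\ref{thm:rohnimprove}. Write $\ul f := \IDmF$ and $\ul g := \IDmG$, and denote by $R$ the right-hand side of~(\ref{thm:int:dolni_mez_Rohn_improve}). First I would identify $R$ with the best case of the dual optimal value, that is, show $R = \ul g$. The inequality $\ul g \ge R$ is elementary: any $y$ admissible in $R$ (i.e.\ $\T Ay\le c$ for all $A\in\A$, $c\in\cc$) is dual-feasible in every scenario, so $g(A,b,c)\ge \T by\ge \min_{b'\in\b}\T{b'} y$ for each scenario, whence $\ul g\ge R$. The reverse inequality $\ul g\le R$ is the substantive part: it is exactly Rohn's best-case formula, expressing that the smallest dual optimum over the interval data is attained on the strong solution set $\{y : \forall A\in\A,\ \forall c\in\cc,\ \T Ay\le c\}$ with the worst objective $\min_{b'\in\b}\T{b'} y$. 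Having $R=\ul g$, weak duality (Proposition~\ref{claim:weak_duality}) gives $\ul f\ge \ul g=R$ unconditionally, so~(\ref{thm:int:dolni_mez_Rohn_improve}) is equivalent to the single equality $\ul f=\ul g$.

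Next I would prove the forward implication by contraposition. If the condition fails, then the primal ILP is not weakly feasible and the dual ILP is not strongly feasible. The former makes every primal scenario infeasible, so $\ul f=\infty$; the latter yields a scenario with infeasible dual, so $\ul g=-\infty$. Hence $\ul f\ne\ul g$ and~(\ref{thm:int:dolni_mez_Rohn_improve}) fails.

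For the backward implication I would split into the two disjuncts of the condition. If the dual ILP is strongly feasible, then every scenario has a feasible dual, so by linear-programming duality $f(A,b,c)=g(A,b,c)$ in \emph{every} scenario (when the primal is infeasible the dual is unbounded and both values equal $\infty$); taking infima gives $\ul f=\ul g$. If instead the primal ILP is weakly feasible, then some scenario is primal-feasible, so $\ul f<\infty$. Assume for contradiction $\ul f\ne\ul g$; since $\ul f\ge\ul g$ always and the strongly-feasible-dual case is already settled, we may assume the dual is not strongly feasible, so $\ul g=-\infty$ and therefore $\ul f$ is finite. But then Rohn's Theorem~\ref{thm:Rohn} applies and gives $\ul f=R=\ul g=-\infty$, contradicting finiteness of $\ul f$. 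Hence $\ul f=\ul g$, which is~(\ref{thm:int:dolni_mez_Rohn_improve}).

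The main obstacle is the identification $R=\ul g$: unlike the union/best-objective manipulation that produced $\ol g$ in the proof of Theorem~\ref{thm:rohnimprove}, here the minimum over scenarios interacts with the maximum over $y$ through the constraint, so the inequality $\ul g\le R$ is a genuine minimax statement (Rohn's best case) rather than a commutation of two maxima. The remaining subtlety---that primal weak feasibility together with a dual infeasible scenario forces $\ul f=-\infty$, a limiting phenomenon already visible in one dimension---is dispatched cleanly by invoking Theorem~\ref{thm:Rohn} for the finite case instead of arguing the limit directly.
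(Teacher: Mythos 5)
Your plan reduces (\ref{thm:int:dolni_mez_Rohn_improve}) to the single equality $\ul f=\ul g$ and then argues by cases, but the whole reduction rests on the unproven identity $R=\ul g$ (your ``step 1''), and that identity is \emph{not} ``exactly Rohn's best-case formula''. Theorem~\ref{thm:Rohn} concerns $\IDmF$, not the dual worst case $\IDmG$, and it is stated only under the hypothesis that $\IDmF$ is finite. Consequently your backward implication genuinely breaks in one case: the dual ILP is strongly feasible but the primal ILP is not weakly feasible. There every scenario has an infeasible primal and (by scenario-wise LP duality) an unbounded dual, so $\ul f=\ul g=+\infty$, and (\ref{thm:int:dolni_mez_Rohn_improve}) then requires $R=+\infty$, i.e.\ that the strong solution set $\{y:\T Ay\le c\ \forall A\in\A,\ \forall c\in\cc\}$ contains points with arbitrarily large $\minn_{b \in \b}{\T by}$. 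Nothing you invoke yields this: Theorem~\ref{thm:Rohn} is silent because finiteness fails, weak duality is vacuous here, and the unconditional ``best-case formula'' you cite is, restricted to this very case, precisely the claim under proof---so the appeal is circular. (The rest of your argument is sound: the forward implication needs only the trivial observation that if the dual is not strongly feasible there is no strong solution, hence $R=\max{\emptyset}=-\infty$; and your treatment of the case ``primal weakly feasible, dual not strongly feasible''---deriving a contradiction from finiteness via Theorem~\ref{thm:Rohn}---is correct and rather elegant.)

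The paper closes exactly this hole by never mentioning $\IDmG$ at all. It uses the known fact that $\IDmF$ equals the optimal value of one explicit LP, $\min{\ul{c}^Tx \mid \ul{A}x\le\ol{b},\ -\ol{A}x\le-\ul{b},\ x\ge0}$, whose feasibility is equivalent to weak feasibility of the primal ILP, and then shows (via Rohn's characterization of strong solutions of interval inequalities) that the LP dual of this program has as its feasible set precisely the strong solutions of the dual interval system, so its optimal value equals the right-hand side $R$. Equation (\ref{thm:int:dolni_mez_Rohn_improve}) thus holds if and only if this ordinary primal--dual LP pair has zero duality gap, i.e.\ if and only if at least one of the two LPs is feasible, which is verbatim ``primal weakly feasible or dual strongly feasible''. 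In that framework the infinite cases, including the one your proof misses, come for free from classical LP duality: a feasible dual LP together with an infeasible primal LP forces the dual LP, hence $R$, to be $+\infty$. To repair your proof you would need to supply this Farkas/LP-duality argument (or an equivalent one) for the missing case; at that point you would essentially have reproduced the paper's proof.
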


    \begin{proof}
    According to \cite{Hla2012a,Roh2006:3}, we have
    \begin{align}\label{pf:dolni_mez_Rohn_improve:uf}
    \IDmF=\min{\ul{c}^Tx \mid \ul{A}x\leq\ol{b},\ -\ol{A}x\leq-\ul{b},\ x\geq0 }.
    \end{align}
    Its dual program reads
    \begin{align}\label{pf:dolni_mez_Rohn_improve:g}
    g=\max{\ol{b}^Ty_1-\ul{b}^Ty_2 \mid 
      \ul{A}^Ty_1-\ol{A}^Ty_2\leq\ul{c},\ y_1,y_2\leq0 },
      \end{align}
      or, equivalently
      \begin{align*}
      g=\max{\alpha  \mid \ol{b}^Ty_1-\ul{b}^Ty_2\geq\alpha,\ 
        \ul{A}^Ty_1-\ol{A}^Ty_2\leq\ul{c},\ y_1,y_2\leq0 }.
        \end{align*}
        By substitution $y_1:=-y_1$ and $y_2:=-y_2$, we obtain
        \begin{align*}
        g=\max{\alpha  \mid \ol{b}^Ty_1-\ul{b}^Ty_2\leq-\alpha,\ 
          \ol{A}^Ty_2-\ul{A}^Ty_1\leq\ul{c},\ y_1,y_2\geq0 }.
          \end{align*}
          The constraints describe strong solutions of the corresponding interval linear inequalities, so by \cite{Roh2006:2}, we can equivalently write
          \begin{align*}
          g=\max{\alpha  \mid -b^Ty\leq-\alpha\ \forall b\in\b,\ 
            A^Ty\leq c\ \forall A\in\A,\forall c\in\cc},
            \end{align*}
            which yields the right-hand side of (\ref{thm:int:dolni_mez_Rohn_improve}).
            Equation (\ref{thm:int:dolni_mez_Rohn_improve}) is valid if and only if there is zero duality gap between (\ref{pf:dolni_mez_Rohn_improve:uf}) and (\ref{pf:dolni_mez_Rohn_improve:g}). That is, either the primal program~(\ref{pf:dolni_mez_Rohn_improve:uf}) must be feasible (i.e., the ILP is weakly feasible), or the dual program~(\ref{pf:dolni_mez_Rohn_improve:g}) must be feasible (i.e., the dual ILP is strongly feasible).
\end{proof}

Proposition \ref{thm:gap:condit:strong3} can be used to obtain the following corollary. 
\begin{corollary}
If a \typeA ILP has strongly zero \DG, then (\ref{thm:int:dolni_mez_Rohn_improve}) holds.
\end{corollary}

 Weakly zero \DG is not sufficient to fulfill the equation~(\ref{thm:int:dolni_mez_Rohn_improve}), see the Example~\ref{ex:A_wzDG_primal_infeasible}.
 \begin{example}\label{ex:A_wzDG_primal_infeasible}
Let $\min \T \cc x \mid \A x =\b, x\ge0$ be the following \typeA ILP 
\begin{align*}
&\text{primal ILP:} & &\min [-1, 0]\;x_1 & &\mid x_1-x_2=0,\,x_1-x_2=1,\,x_1,x_2\ge0\\
&\text{dual ILP:} & &\max y_2 & &\mid y_1+y_2\le[-1, 0],\, -y_1-y_2\le0
\end{align*}
The primal ILP is infeasible for each scenario. Thus $\IDmF=\infty$ which is the left hand side of (\ref{thm:int:dolni_mez_Rohn_improve}). The dual ILP is weakly feasible but not strongly feasible. Thus, the \DG is weakly zero. Since the dual ILP has no strong solution, the right hand side of (\ref{thm:int:dolni_mez_Rohn_improve}) is equal to$\max{\emptyset}=-\infty$.
\end{example}

\section{Conclusions}
We introduced weakly and strongly zero duality gap in interval linear programming. Mostly, we focused on standard types of interval linear systems, \typeA with equalities and non-negative variables, \typeB with inequalities and \typeC with equalities and non-negative variables, and their variants with a real matrix of constraints.

 We proposed a characterisation for weakly zero duality gap which uses two linear programs for \typeC and exponentially many linear programs for types (A) and (B). We provided sufficient conditions and necessary conditions for strongly zero duality gap and a full characterisation for ILP problem with real matrix of coefficients. We analyzed computational complexity of determining whether an ILP problem has weakly or strongly zero duality gap.

We focused on the best and the worst case for the optimal value of an~ILP problem. We strengthened previous results and we provide equivalent characterisations.

\medskip

We state open questions that have arisen:
\begin{enumerate}[-]
\item Determine the computational complexity of testing strongly zero \DG of \typeC ILP.
\item Provide a full characterization of strongly zero \DG.
\end{enumerate}

 Another interesting problem is to find a probably super-polynomial algorithm that can decide whether \DG is strongly zero for those problems that were proven to be \coNP-hard. 
This could also yield completeness results.
In~further direction an approximation algorithm or a heuristic could be of some interest.

\bibliographystyle{plain}
\bibliography{src/literatura}

\begin{thebibliography}{10}

\bibitem{AleHer1983}
{G\"otz} Alefeld and J{\"u}rgen Herzberger.
\newblock {\em Introduction to Interval Computations}.
\newblock Computer Science and Applied Mathematics. Academic Press, New York,
  1983.

\bibitem{Bee1978}
H.~Beeck.
\newblock {Linear programming with inexact data}.
\newblock Technický report TUM-ISU-7830, Technical University of Munich,
  Munich, 1978.

\bibitem{CerAmb2017}
R.~Cerulli, C.~D'Ambrosio, and M.~Gentili.
\newblock Best and worst values of the optimal cost of the interval
  transportation problem.
\newblock In Antonio Sforza and Claudio Sterle, editors, {\em Optimization and
  Decision Science: {Methodologies} and Applications}, volume 217 of {\em
  Springer Proceedings in Mathematics \& Statistics}, pages 367--374. Springer,
  Cham, 2017.

\bibitem{ChinRam2000}
J.~W. Chinneck and K.~Ramadan.
\newblock Linear programming with interval coefficients.
\newblock {\em J. Oper. Res. Soc.}, 51(2):209--220, 2000.

\bibitem{Fie2006}
M.~Fiedler, J.~Nedoma, J.~Ram\'{\i}k, J.~Rohn, and K.~Zimmermann.
\newblock {\em Linear Optimization Problems with Inexact Data}.
\newblock Springer, New York, 2006.

\bibitem{GabMur2010b}
Virginie Gabrel and C\'{e}cile Murat.
\newblock Robustness and duality in linear programming.
\newblock {\em J. Oper. Res. Soc.}, 61(8):1288--1296, 2010.

\bibitem{GabMur2010}
Virginie Gabrel, C\'{e}cile Murat, and Nabila Remli.
\newblock {Linear programming with interval right hand sides}.
\newblock {\em Int. Trans. Oper. Res.}, 17(3):397--408, 2010.

\bibitem{GarHla2017a}
Elif Garajov\'{a}, Milan Hlad\'{\i}k, and Miroslav Rada.
\newblock The effects of transformations on the optimal set in interval linear
  programming.
\newblock In L.~Zadnik Stirn~et al., editor, {\em Proceedings of the 14th
  International Symposium on Operational Research SOR'17, Bled, Slovenia,
  September 27-29, 2017}, pages 487--492, Ljubljana, Slovenia, 2017. Slovenian
  Society Informatika.

\bibitem{Elif}
Elif Garajov{\'{a}}, Milan Hlad{\'{\i}}k, and Miroslav Rada.
\newblock On the properties of interval linear programs with a fixed
  coefficient matrix.
\newblock In {\em Springer Proceedings in Mathematics {\&} Statistics}, pages
  393--401. Springer International Publishing, 2017.

\bibitem{Ger1981}
W.~Gerlach.
\newblock {Zur L\"osung linearer Ungleichungssysteme bei St\"orung der rechten
  Seite und der Koeffizientenmatrix}.
\newblock {\em Math. Operationsforsch. Stat., Ser. Optimization}, 12:41--43,
  1981.

\bibitem{Hla2009b}
Milan Hlad\'{\i}k.
\newblock Optimal value range in interval linear programming.
\newblock {\em Fuzzy Optim. Decis. Mak.}, 8(3):283--294, 2009.

\bibitem{Hla2012a}
Milan Hlad\'{\i}k.
\newblock {Interval linear programming: A survey}.
\newblock In Zolt\'{a}n~\'{A}d\'{a}m Mann, editor, {\em Linear Programming --
  New Frontiers in Theory and Applications}, chapter~2, pages 85--120. Nova
  Science Publishers, New York, 2012.

\bibitem{Hla2013}
Milan Hlad\'{\i}k.
\newblock Weak and strong solvability of interval linear systems of equations
  and inequalities.
\newblock {\em Linear Algebra Appl.}, 438(11):4156--4165, 2013.

\bibitem{Hla2014a}
Milan Hlad\'{\i}k.
\newblock How to determine basis stability in interval linear programming.
\newblock {\em Optim. Lett.}, 8(1):375--389, 2014.

\bibitem{Hla2017a}
Milan Hlad\'{\i}k.
\newblock Transformations of interval linear systems of equations and
  inequalities.
\newblock {\em Linear Multilinear Algebra}, 65(2):211--223, 2017.

\bibitem{Kon2001}
Jana Kon\'{\i}\v{c}kov\'a.
\newblock Sufficient condition of basis stability of an interval linear
  programming problem.
\newblock {\em ZAMM, Z. Angew. Math. Mech.}, 81(Suppl. 3):677--678, 2001.

\bibitem{Mach1970}
Bernd Machost.
\newblock {Numerische Behandlung des Simplexverfahrens mit
  intervallanalytischen Methoden}.
\newblock Technical Report~30, {Berichte der Gesellschaft f\"{u}r Mathematik
  und Datenverarbeitung}, Bonn, 1970.

\bibitem{MooKea2009}
Ramon~E. Moore, R.~Baker Kearfott, and Michael~J. Cloud.
\newblock {\em Introduction to Interval Analysis}.
\newblock SIAM, Philadelphia, PA, 2009.

\bibitem{MosHla2016a}
Amin Mostafaee, Milan Hlad\'{\i}k, and Michal {\v{C}}ern\'{y}.
\newblock Inverse linear programming with interval coefficients.
\newblock {\em J. Comput. Appl. Math.}, 292:591--608, 2016.

\bibitem{sor}
J.~Novotn\'{a}, M.~Hlad\'{i}k, and T.~Masa\v{r}\'{i}k.
\newblock In L.~Zadnik Stirn~et al., editor, {\em Proceedings of the 14th
  International Symposium on Operational Research SOR'17, Bled, Slovenia,
  September 27-29, 2017}, pages 501--506, Ljubljana, Slovenia, 2017. Slovenian
  Society Informatika.

\bibitem{Roh1980}
Ji\v{r}\'{\i} Rohn.
\newblock {Duality in interval linear programming}.
\newblock In K.~Nickel, editor, {\em {Interval mathematics, Proc. Int. Symp.,
  Freiburg, 1980.}}, pages 521--529, New York, 1980. Academic Press.

\bibitem{Roh1997}
Ji\v{r}\'{\i} Rohn.
\newblock Complexity of some linear problems with interval data.
\newblock {\em Reliab. Comput.}, 3(3):315--323, 1997.

\bibitem{Roh2006:3}
Ji\v{r}\'{\i} Rohn.
\newblock Interval linear programming.
\newblock In M.~Fiedler, J.~Nedoma, J.~Ram\'{\i}k, J.~Rohn, and K.~Zimmermann,
  editors, {\em Linear optimization problems with inexact data}, chapter~3,
  pages 79--100. Springer, New York, 2006.

\bibitem{Roh2006:2}
Ji\v{r}\'{\i} Rohn.
\newblock Solvability of systems of interval linear equations and inequalities.
\newblock In M.~Fiedler, J.~Nedoma, J.~Ram\'{\i}k, J.~Rohn, and K.~Zimmermann,
  editors, {\em Linear optimization problems with inexact data}, chapter~2,
  pages 35--77. Springer, New York, 2006.

\bibitem{RohKre1994}
Ji\v{r}\'{\i} Rohn and Jana Kreslov\'a.
\newblock Linear interval inequalities.
\newblock {\em Linear Multilinear Algebra}, 38(1-2):79--82, 1994.

\bibitem{Roh1981}
Jiří Rohn.
\newblock Strong solvability of interval linear programming problems.
\newblock {\em Computing}, 26(1):79--82, 1981.

\bibitem{Serafini05}
Paolo Serafini.
\newblock Linear programming with variable matrix entries.
\newblock {\em Oper. Res. Lett.}, 33(2):165--170, 2005.

\bibitem{Vaj1961}
S.~Vajda.
\newblock {\em Mathematical programming}.
\newblock Addison-Wesley, Reading Mass., USA, 1961.

\end{thebibliography}

\end{document}